\DeclareMathOperator*{\dv}{\operatorname{div}}
\DeclareMathOperator*{\tr}{\operatorname{tr}}
\DeclareMathOperator*{\Span}{\operatorname{span}}
\DeclareMathOperator*{\R}{\mathbb{R}}
\DeclareMathOperator*{\Sym}{\mathbb{S}}
\DeclareMathOperator*{\T}{\mathcal{T}}
\DeclareMathOperator*{\TT}{\mathbb{T}}
\DeclareMathOperator*{\E}{\mathcal{E}}
\DeclareMathOperator*{\V}{\mathcal{V}}
\DeclareMathOperator*{\F}{\mathcal{F}}
\newcommand{\dx}[1]{\mathrm{d}#1}
\newcommand{\norm}[1]{\left\lVert#1\right\rVert}
\newcommand{\abs}[1]{\left|#1\right|}
\newcommand{\ip}[2]{\left(#1\right)_{#2}}
\newcommand{\trace}[2]{\left.#1\right|_{#2}}
\newtheorem{thm}{Theorem}[section]
\newtheorem{lemma}[thm]{Lemma}
\newtheorem{assumption}[thm]{Assumption}
\newtheorem{definition}[thm]{Definition}
\newtheorem{remark}[thm]{Remark}
\numberwithin{equation}{section}
\long\def\comment#1{}
\begin{document}

	\title[]{
		A new mixed finite element for the linear elasticity problem in 3D
	}
	%\subtitle{Do you have a subtitle?\\ If so, write it here}
	
	%\titlerunning{Short form of title}        % if too long for running head
	
	\author {Jun Hu}
	\address{LMAM and School of Mathematical Sciences, Peking University,
		Beijing 100871, P. R. China.\\ Chongqing Research Institute of Big Data, Peking University, Chongqing 401332, P. R. China. hujun@math.pku.edu.cn}
	
	%\authorrunning{Short form of author list} % if too long for running head
	\author{Rui Ma}
	\address{Beijing Institute of Technology,
		Beijing 100081, P. R. China. rui.ma@bit.edu.cn}
	
	\author{Yuanxun Sun}
	\address{LMAM and School of Mathematical Sciences, Peking University,
		Beijing 100871, P. R. China. 1901112048@pku.edu.cn}

	\thanks{The authors were supported by NSFC
		project 12288101}

	\begin{abstract}
		
		\vskip 15pt
		
		This paper constructs the first mixed finite element for the linear elasticity problem in 3D using $P_3$ polynomials for the stress and discontinuous $P_2$ polynomials for the displacement on tetrahedral meshes under some mild mesh conditions. The degrees of freedom of the stress space as well as the corresponding nodal basis are established by characterizing a space of some piecewise constant symmetric matrices on a patch around each edge. Macro-element techniques are used to define a stable interpolation to prove the discrete inf-sup condition. Optimal convergence is obtained theoretically.

		\vskip 15pt
		
		\noindent{\bf Keywords.}{
			linear elasticity, lower order mixed elements, macro-element techiniques, discrete inf-sup condition.}
		
		\vskip 15pt
		
		\noindent{\bf Mathematics subject classification.}
		{ 65N30, 74B05.}
	\end{abstract}
	\maketitle
	
	\section{Introduction}
	
	It is a challenge to design stable discretizations for the linear elasiticity equations based on the Hellinger-Reissner variational principle, due to the additional symmetry constraint on the stress tensor. For the 3D problem, the first mixed finite element on tetrahedral meshes was proposed in \cite{Adams} as the lowest order generalization of the two dimensional symmetric mixed elements in \cite{Arnold-Winther-conforming}. This element was extended in \cite{Arnold-Awanou-Winther} to higher order cases, where the displacement is approximated by discontinuous $P_{k-1}$ polynomials, and the stress is approximated by $P_{k+2}$ polynomials whose divergence is $P_{k-1}$  with $k\geqslant2$. Later, Hu and Zhang designed symmetric mixed finite elements  using $P_k$ polynomials for the stress with $k\geqslant4$ \cite{HuZhang2015tre}, see \cite{HuZhang2014a} for analogous elements in 2D and \cite{Hu2015trianghigh} in any dimension. Since there are not sufficient degrees of freedom (DoFs) on faces, the analysis of the discrete inf-sup conditions in \cite{Adams,Arnold-Awanou-Winther,HuZhang2015tre,HuZhang2015trianglow} needs  $P_4$ polynomials in stress spaces to define some stable commuting interpolations. Lower order mixed elements merely  using $P_k$ ($k< 4$) polynomials  for the stress  were constructed on macro-element meshes, such as $k\geqslant 2$ for Alfeld splits  \cite{Alfeld} and $k\geqslant 1$ for Worsey-Farin splits \cite{Worsey},  and each macro-element therein consists of four and twelve elements, respectively. Other attempts to lower the polynomial order include  nonconforming finite element methods \cite{Arnold-nonconforming,CaiYe,Guzman2011,Huma2018,IP2017} and stabilized methods \cite{stabilized}. Interested readers can refer  to \cite{AwanouRec,Hu1} for mixed finite elements on cubic meshes and  \cite{HuMaPrism} for those on triangular prism meshes. 
	
	%Earliest works focused on designing composite elements {\color{blue}or weakly symmetric elements}, see \cite{PEERS,Arnold1984, Johnson-Mercier}, and see \cite{Arnold-Falk-Winther,Boffi-Brezzi-Fortin,CGG} {\color{blue}for some recent works}. In \cite{Arnold-Winther-conforming}, Arnold and Winther designed the first family of mixed finite elements on triangular meshes, based on polynomial shape function spaces {\color{blue}for the 2D problem. 

	%From then on, various stable mixed elements have been constructed, see \cite{Adams,Arnold-Awanou,Arnold-Awanou-Winther,Arnold-Falk-Winther,Boffi-Brezzi-Fortin,CGG}. {\color{blue}Hu and Zhang designed symmetric mixed finite elements by using complete $P_k$ polynomials for the stress} \cite{Hu2015trianghigh,HuZhang2014a,HuZhang2015tre}.
	
This paper proposes the first  mixed finite element using $P_3$ polynomials for   the stress without any higher order polynomials on tetrahedral meshes under some mild mesh conditions. The displacement space is the space of discontinuous  $P_2$ polynomials. The newly constructed stress element is $H(\dv)$-conforming and continuous at vertices. The DoFs of this stress element cannot be defined in the Ciarlet's convention. Motivated by the $C^1$ continuous finite elements in 2D in \cite{Nodal,nodalbasis}, this paper establishes the DoFs  of the new stress element and the corresponding basis functions by geometry analysis at edges.   The parity of the number of elements in a patch  around an interior edge and the singularity of the edge play  an essential role and determine  the choices of some face DoFs.  The main ingredient of the analysis is to introduces a space $\TT_e$ consisting of some piecewise constant symmetric matrices on the edge patch $\omega_e$ of an edge $e$. Such a space can be characterized by the normal-normal components  and normal-tangential components of matrices  on the faces sharing $e$. More precisely,  for a boundary edge $e$ as well as an interior singular edge $e$, the matrices in $\TT_e$ can be uniquely determined by the normal-normal component on each face plus the normal-tangential component on one face. For an interior edge $e$ with odd numbers of elements in $\omega_e$, the matrices in $\TT_e$ can be uniquely determined by the normal-normal component on each face. While for an interior non-singular edge $e$ with even elements in $\omega_e$, the normal-normal components of matrices in $\TT_e$  are linearly dependent, and the matrices in $\TT_e$ can be uniquely determined by the normal-normal components on all faces except one plus the normal-tangential component on one face. The corresponding basis of $\TT_e$ can be computed by explicit expressions.   With the characterization of $\TT_e$, some DoFs on faces can be obtained for the new stress space, which are similar to the second order derivative DoFs on edges in \cite{nodalbasis}.  The multiplications of the basis of $\TT_e$ with scalar Lagrange basis functions lead to the corresponding basis for the new stress  space. The other DoFs of the new stress space are analogous to the DoFs of the stress spaces in \cite{ChenHuang2022,HuZhang2015tre}.
Analogous ideas been applied in two dimensions to relax the continuity of the stress at vertices in \cite{Gong,HuMa}. Instead of presenting the details of the nodal basis, a hybridized method was considered in  \cite{Gong}. In \cite{HuMa}, this idea was used to deal with non-consistent traction boundary conditions. Besides, some estimates of the basis of $\TT_e$ are obtained in this paper under the  mesh conditions presented in Assumption \ref{ass}. 

This paper adopts the two-step method in \cite{Hu2015trianghigh,HuZhang2015tre,HuZhang2015trianglow} and some macro-element techniques to analyze the discrete inf-sup condition of the new mixed element.  The two-step method 
therein requires  to define a stable commuting interpolation. However, for an interior non-singular edge $e$ with even number of elements in $\omega_e$, the normal-normal components on faces are linearly dependent for matrices in $\TT_e$. As a result, it is difficult to construct a stable commuting interpolation as in \cite[Lemma 3.1]{Hu2015trianghigh}. To circumvent this, under some mild mesh conditions, this paper proposes  proper linear combinations  of  the basis functions of the stress space for each face. These combinations are supported on some macro-elements and are exactly the basis functions with respect to the constant and linear moments of the normal-normal component  of the stress on each face. This and the estimates of the basis of $\TT_e$ enable to define a stable interpolation for the new stress space using $P_3$ polynomials as the shape function space to deal with the rigid motion as in \cite[Lemma 3.1]{Hu2015trianghigh}.  The discrete inf-sup condition  is then established by using the two-step method in \cite{Hu2015trianghigh,HuZhang2015tre,HuZhang2015trianglow}. Some error estimates are provided as well.

	The rest of the paper is organized as follows. Section 2 presents the mixed formulation, notation for triangulations and some mild mesh conditions. A symmetric-matrix space on an edge patch is introduced and characterized in this section. Section 3 proposes a new mixed element  using $P_3$ polynomials for the stress and discontinuous $P_2$ polynomials for the displacement. A unisolvent set of DoFs is given for the stress space. The discrete stability and optimal convergence results are proved. Section 4 summarizes the results and introduces possible future work.
	
	\section{Preliminaries and notation}
	
	This section presents the mixed formulation of the linear elasticity equations in 3D and notation for triangulations, and prepares some results of a space of some piecewise constant matrices for later use.

	\subsection{Mixed formulation}
	Let $\Omega\subset\R^3$ be a simply-connected bounded polyhedral domain in three dimensions. Based on the Hellinger-Reissner principle, the  linear elasticity
	problem within a stress-displacement ($\sigma$-$u$) form reads: given $f\in V$, find $(\sigma,u)\in \Sigma\times V$  such that
	\begin{equation}
	\left\{
	\begin{aligned}
	(A\sigma,\tau)_{\Omega}+(\dv\tau,u)_{\Omega}&=0,\\
	(\dv\sigma,v)_{\Omega}&=(f,v)_{\Omega}
	\end{aligned}
	\right.
	\label{continuousP}
	\end{equation}
	holds for any $(\tau,v)\in\Sigma\times V$, where $\Sigma:=H({\rm div},\Omega;\mathbb {S})$ is the symmetric stress field and $V:=L^2(\Omega;\mathbb{R}^3)$ is the displacement field, the compliance tensor $A(x):\Sym\rightarrow\Sym$ is bounded and symmetric positive
	definite uniformly for $x\in\Omega$, with $\Sym:=\R^{3\times3}_{\text{sym}}$ being the set of symmetric matrices. For the homogeneous isotropic case the compliance tensor is given by $A\tau=\frac{1}{2\mu}\left(\tau-\frac{\lambda}{2\mu+3\lambda}\left(\tr\tau\right) I\right)$, where $\mu>0$, $\lambda\geqslant0$ are the Lam\'{e} constants.

	The space $H(\dv,\Omega;\Sym)$ is defined by
	\begin{equation*}
	H(\dv,\Omega;\Sym):=\{\tau\in L^2(\Omega;\Sym):\ \dv\tau\in L^2(\Omega;\R\nolimits^3)\},
	\end{equation*}
	equipped with the norm
	\begin{equation*}
	\norm{\tau}_{H(\dv,\Omega)}^2:=\norm{\tau}_{L^2(\Omega)}^2+
	\norm{\dv\tau}_{L^2(\Omega)}^2.
	\end{equation*}
	The problem \eqref{continuousP} has a unique solution in $\Sigma\times V$ \cite[Chapter 9.1.1]{Boffi-Brezzi-Fortin2013}. The discretization of \eqref{continuousP} can avoid the locking phenomenon and provide a direct approximation for the stress.

	Throughout this paper, let $H^k(\omega;X)$ denote the Sobolev space consisting of functions with domain $\omega\subset\R^3$, taking values in the finite-dimensional vector space $X$ ($\R$, $\R^3$ or $\Sym$), and with all derivatives of order at most $k$ square-integrable. Let $\norm{\cdot}_{k,\omega}$ and $\abs{\cdot}_{k,\omega}$ be the norm and the seminorm of $H^k(\omega;X)$, and let $\left(\cdot,\cdot\right)_{\omega}$ denote the inner product of $L^2(\omega)$. When $\omega=\Omega$, the norm and the seminorm are simply denoted by $\norm{\cdot}_k$ and $\abs{\cdot}_k$. Let $P_k(\omega;X)$ denote the space of polynomials of degree at most $k$, taking value in the space $X$.

	\subsection{Some notation and mesh conditions}
	\label{sec:mesh}

	Let $\T_h$ be a shape-regular triangulation of $\Omega$. Let $h_K=\operatorname{diam}(K)$ be the diameter of element $K$ and let $h=\max_{K\in\T_h}h_K$ indicate the maximal mesh size. For an element $K$, the sets of all the vertices, edges, and faces of $K$ are denoted by $\V(K)$, $\E(K)$ and $\F(K)$ respectively. Let $\V(\T_h)$, $\E(\T_h)$ and $\F(\T_h)$ denote the sets of all the vertices, edges, and faces of $\T_h$. Let $\E^b(\T_h)$ and $\E^\circ(\T_h)$ denote the sets of all the boundary and interior edges of $\T_h$, respectively. For an edge $e$, the  edge patch sharing the edge $e$ is denoted by $\omega_e=\bigcup_{K\in\T_h:\ e\in\E(K)}K$. See Figure \ref{fig:macro} for the edge patch for an interior edge $e$. Let $m_e$ be the number of elements in $\omega_e$. Let $K_i$ ($1\leqslant i\leqslant m_e$) denote the elements in $\omega_e$ and let $F_i$ ($1\leqslant i\leqslant m_e+1$) denote the faces containing $e$ so that $F_i\subset\partial K_i$ and $F_{i+1}\subset\partial K_i$ for $i=1,2,\cdots,m_e$. If $e$ is an interior edge, then $F_1=F_{m_e+1}$, \emph{i.e.} the elements $K_1$ and $K_{m_e}$ are adjacent. Let $\bm{n}_i$ be the unit normal vector of $F_i$ and $\bm{t}$ be the tangential vector along the edge $e$, and let $\bm{t}_i=\bm{n}_i\times\bm{t}$ be the other tangential vector of the face $F_i$. Let $\theta_i$ be the dihedral angle between $F_{i}$ and $F_{i+1}$ ($1\leqslant i\leqslant m_e$). For the sake of simplicity, when $e$ is an interior edge, the subscripts for the angles and the faces are understood in the sense of$\mod m_e$. When $e$ is a boundary edge, $F_1$ and $F_{m_e+1}$ are the boundary faces. For convenience, define the nodal patch $\Omega_{x_0}=\bigcup_{K\in\T_h:\ x_0\in\V(K)}K$ for a vertex $x_0$, the face patch $\Omega_{F}=\bigcup_{K'\in\T_h:\ F\cap K'\neq\emptyset}K'$ for a face $F$, and the element patch $\Omega_K=\bigcup_{K'\in\T_h:\ K\cap K'\neq\emptyset}K'$ for an element $K$.

		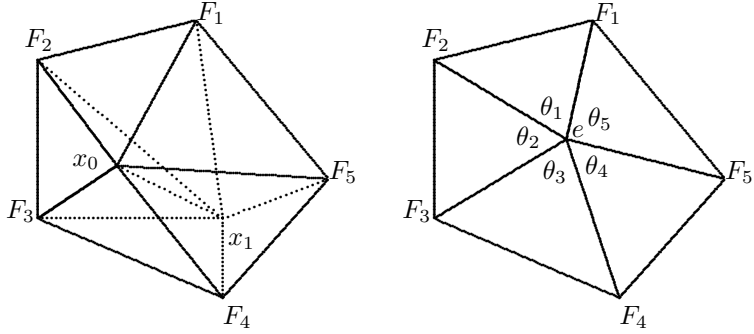
\begin{figure}[h]
			\centering
				\begin{picture}(200,150)(40,-22)
				\def\lb{\circle*{0.8}}
				\def\lc{\vrule width1.2pt height1.2pt}
				\def\la{\circle*{0.4}} %线的粗细
				
				%% 3D

				\put(-12,  20){$F_3$} % vertex F_3, true cordinate (0,0)
				\put(60,  95){$F_1$} % vertex F_1, true cordinate (60,50)
				\put(-5,  85){$F_2$} % vertex F_2, true cordinate (20,45)		
				\put(70,  -20){$F_4$} % vertex F_4, true cordinate (50,-20)
				\put(110,  35){$F_5$} % vertex F_5, true cordinate (105,0)
				\put(13,  40){$x_0$} % vertex E, true cordinate (50,20)
				\put(72,  10){$x_1$} % vertex E, true cordinate (50,20)

				\multiput(0,  80)(.6,  .15){100}{\la} % edge F_1F_2
				\multiput(00,  80)(0,  -.6){100}{\la} % edge F_2F_3	
				\multiput(00,  20)(.7,  -.3){100}{\la} % edge F_3F_4
				\multiput(70, -10)(.4,  .45){100}{\la} % edge F_4F_5
				\multiput(60,  95)(.5,  -.6){100}{\la} % edge F_5F_1
				
				\multiput(30,  40)(.3,  .55){100}{\la} % edge V_1F_1		
				\multiput(30,  40)(-.3,  .4){100}{\la} % edge V_1F_2
				\multiput(30,  40)(-.3,  -.2){100}{\la} % edge V_1F_3
				\multiput(30,  40)(.4,  -.5){100}{\la} % edge V_1F_4
				\multiput(30,  40)(.8,  -.05){100}{\la} % edge V_1F_5

				\multiput(70,  20)(-.25,  1.875){40}{\la} % edge V_2F_1		
				\multiput(70,  20)(-1.75,  1.5){40}{\la} % edge V_2F_2
				\multiput(70,  20)(-1.75,  0){40}{\la} % edge V_2F_3
				\multiput(70,  20)(0,  -1.5){20}{\la} % edge V_2F_4
				\multiput(70,  20)(2,  .75){20}{\la} % edge V_2F_5
				
				\multiput(30,  40)(2,  -1){20}{\la} % edge V_1V_2
				
				%% projected
				
				\put(138,  20){$F_3$} % vertex F_3, true cordinate (0,0)
				\put(210,  95){$F_1$} % vertex F_1, true cordinate (60,50)
				\put(145,  85){$F_2$} % vertex F_2, true cordinate (20,45)		
				\put(220,  -20){$F_4$} % vertex F_4, true cordinate (50,-20)
				\put(260,  35){$F_5$} % vertex F_5, true cordinate (105,0)
				\put(202,  51){$e$} % vertex E, true cordinate (50,20)
				
				\put(190,  59){$\theta_1$} 
				\put(181,  48){$\theta_2$}
				\put(191,  35){$\theta_3$}
				\put(207,  37){$\theta_4$}
				\put(208,  55){$\theta_5$}

				\multiput(150,  80)(.6,  .15){100}{\la} % edge F_1F_2
				\multiput(150,  80)(0,  -.6){100}{\la} % edge F_2F_3	
				\multiput(150,  20)(.7,  -.3){100}{\la} % edge F_3F_4
				\multiput(220, -10)(.4,  .45){100}{\la} % edge F_4F_5
				\multiput(210,  95)(.5,  -.6){100}{\la} % edge F_5F_1
				
				\multiput(200,  50)(.1,  .45){100}{\la} % edge EF_1		
				\multiput(200,  50)(-.5,  .3){100}{\la} % edge EF_2
				\multiput(200,  50)(-.5,  -.3){100}{\la} % edge EF_3
				\multiput(200,  50)(.2,  -.6){100}{\la} % edge EF_4
				\multiput(200,  50)(.6,  -.15){100}{\la} % edge EF_5
				
				\end{picture}
				\caption{The left figure displays an edge patch for the discrete stress space. This patch consists of $m_e=5$ elements that share the common edge $e$ with vertices $x_0$ and $x_1$. The right figure shows the patch projected onto a plane.}
				\label{fig:macro}
		\end{figure}

	\begin{definition}
		An edge $e\in\E^\circ(\T_h)$ is called a singular edge, if $m_e=4$ and $\theta_1+\theta_2=\theta_2+\theta_3=\pi$. See Figure \ref{fig:singular}.
	\end{definition}

	\begin{figure}[h]\setlength\unitlength{1.5pt}
		\centering
		\begin{picture}(100,80)(10,-22)
		\def\lb{\circle*{0.8}}
		\def\lc{\vrule width1.2pt height1.2pt}
		\def\la{\circle*{0.4}} %线的粗细
		
		\put(0,  -6){$F_3$} % vertex F_3, true cordinate (0,0)
		\put(60,  45){$F_1$} % vertex F_1, true cordinate (60,45)
		\put(20,  43){$F_2$} % vertex F_2, true cordinate (20,40)		
		\put(120,  -10){$F_4$} % vertex F_5, true cordinate (120,-10)
		\put(40,  25){$e$} % vertex E, true cordinate (40,30)
		
		\put(37,  34){$\theta_1$} 
		\put(29,  28){$\theta_2$}
		\put(37,  20){$\theta_3$} 
		\put(45,  29){$\theta_4$} 
		
		\multiput(20,  40)(.4,  .05){100}{\la} % edge F_1F_2
		\multiput(0,  0)(.1,  .2){200}{\la} % edge F_2F_3	
		\multiput(0,  0)(.3,  -.025){400}{\la} % edge F_3F_4
		\multiput(120,  -10)(-.3,  .275){200}{\la} % edge F_4F_1
		
		\multiput(40,  30)(.2,  .15){100}{\la} % edge EF_1		
		\multiput(40,  30)(-.2,  .1){100}{\la} % edge EF_2
		\multiput(40,  30)(-.2,  -.15){200}{\la} % edge EF_3
		\multiput(40,  30)(.4,  -.2){200}{\la} % edge EF_4
		
		\end{picture}
		\caption{Singular edge $e$.}
		\label{fig:singular}
	\end{figure}
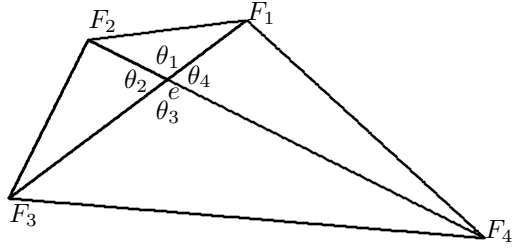
	
	When $e$ is an interior non-singular edge with even $m_e$, the face $F_1$ is chosen so that $\theta_1+\theta_{m_e}\neq\pi$.

	This paper assumes that the following conditions on $\T_h$ hold true.
	\begin{assumption}	\label{ass}
		There exists a positive constant $\kappa>0$ such that
		\begin{enumerate}
			\item For any edge $e$ of $\T_h$, $\kappa<\theta_j<\pi-\kappa$ for $1\leqslant j\leqslant m_e$.
			\item If an interior edge $e$ is not singular and $m_e$ is even, then $\theta_j+\theta_{j+1}<\pi-\kappa$ for $1\leqslant j\leqslant m_e$.
		\end{enumerate}
	\end{assumption}

	The first conditions in Assumption \ref{ass} are the minimum angle conditions. The second conditions in Assumption \ref{ass} are some mild mesh conditions. For instance, it is not difficult to check that if the initial mesh satisfies the mesh conditions, then its uniform refinement satisfies the mesh conditions as well.
	\subsection{Piecewise constant matrices}
	\label{sec:constantT}

	This subsection introduces a matrix space associated with the edge patch $\omega_e$ for an edge $e$ and characterizes the space by some normal-normal components and normal-tangential components on faces. More precisely, define $\TT_e$ to be the space of piecewise constant symmetric matrices  on $\omega_e$ such that the tangential components along $e$ vanish and the normal components are continuous across the interior faces, \emph{i.e.}
	\begin{equation*}
	\begin{aligned}
		\TT\nolimits_e=&\{T:\ \trace{T}{K_j}\in\Sym,\ (\trace{T}{K_j})\bm{t}=0,\\
		&\text{$T\bm{n}_j$ is continuous across $F_j$},\ j=1,2,\cdots,m_e\}.
	\end{aligned}
	\end{equation*}
	For the sake of simplicity, for $T\in\TT_e$, the normal components $\trace{(T\bm{n}_j)}{F_j}$ are denoted by $T\bm{n}_j$, the normal-normal component  $\trace{(\bm{n}_j^TT\bm{n}_j)}{F_j}$ is denoted by $\bm{n}_j^TT\bm{n}_j$, and the normal-tangential component $\trace{(\bm{n}_j^TT\bm{t}_j)}{F_j}$ is denoted by $\bm{n}_j^TT\bm{t}_j$. The normal-normal component and the normal tangential component on each face can be viewed as  linear functionals on $\TT_e$. For $1\leqslant j\leqslant m_e$, since $\trace{(\bm{n}_{j+1}^TT\bm{n}_{j})}{F_j}=\trace{(\bm{n}_j^TT\bm{n}_{j+1})}{F_j}=\trace{(\bm{n}_j^TT\bm{n}_{j+1})}{F_{j+1}}$, this component is denoted by $\bm{n}_j^TT\bm{n}_{j+1}$, and is a linear functional as well.

	The following lemma is a simple but technical result. It shall be later used in the construction of the basis of $\TT_e$ and the estimation of their norms. The matrix norm is chosen to be the Frobenius norm, denoted by $\norm{\cdot}_f$. The vector norm is denoted by $\norm{\cdot}_s$, for $1\leqslant s\leqslant\infty$.
	
	\begin{lemma}
		\label{T-bound}
		Let $\bm{t}_1$ and $\bm{t}_2$ be two non-colinear vectors in $\R^3$, and $\bm{t}=\bm{t}_1\times\bm{t}_2$, $\bm{n}_1=\bm{t}\times\bm{t}_1$, $\bm{n}_2=\bm{t}\times\bm{t}_2$. Let $\theta$ be the angle between $\bm{t}_1$ and $\bm{t}_2$. Given two vectors $\bm{l}_1$ and $\bm{l}_2$ satisfying $\bm{l}_1^T\bm{t}=\bm{l}_2^T\bm{t}=0$, and the compatible condition $\bm{l}_1^T\bm{n}_2=\bm{l}_2^T\bm{n}_1$, there exists a unique $T\in\Span\{\bm{n}_1\bm{n}_1^T,\bm{n}_2\bm{n}_2^T,(\bm{n}_1\bm{n}_2^T+\bm{n}_2\bm{n}_1^T)\}\subset\Sym$ such that $T\bm{n}_1=\bm{l}_1$, $T\bm{n}_2=\bm{l}_2$, and
		\begin{equation*}
			\norm{T}_f^2\leqslant\left(\frac{3}{2}+2\cot^2\theta\right)\left(\norm{\bm{l}_1}_2^2+\norm{\bm{l}_2}_2^2\right).
		\end{equation*}
	\end{lemma}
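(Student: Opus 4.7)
The plan is to prove the lemma by constructing $T$ explicitly in a coordinate frame adapted to $\bm{t}_1,\bm{t}_2$, and then estimating $\|T\|_F^2$ by direct algebra.

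First I would pick coordinates. After rescaling to $|\bm{t}_1|=|\bm{t}_2|=1$ and normalizing $\bm{n}_1,\bm{n}_2$ to unit length, choose an orthonormal basis $\bm{e}_1,\bm{e}_2,\bm{e}_3$ of $\R^3$ with $\bm{n}_1=\bm{e}_2$, $\bm{n}_2=-\sin\theta\,\bm{e}_1+\cos\theta\,\bm{e}_2$, and $\bm{t}\parallel\bm{e}_3$; write $c=\cos\theta$ and $s=\sin\theta$. Writing $T=(T_{ij})$, the three scalar conditions in $T\bm{n}_1=\bm{l}_1$ immediately give $T_{12},T_{22},T_{23}$ from the Cartesian components of $\bm{l}_1$. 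Substituting into $T\bm{n}_2=\bm{l}_2$, the three components yield $T_{11}=(cl_1^x-l_2^x)/s$, $T_{13}=(cl_1^z-l_2^z)/s$, and---for the middle component---exactly the stated compatibility relation $\bm{l}_1^T\bm{n}_2=\bm{l}_2^T\bm{n}_1$. The entry $T_{33}=\bm{t}^T T\bm{t}$ is left unconstrained by these equations, and I would fix $T_{33}=0$; this canonical (minimum-Frobenius-norm) choice is the $T$ whose norm I would bound, and it accounts for the uniqueness asserted in the lemma.

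To obtain the bound, I would substitute the explicit entries into $\|T\|_F^2=T_{11}^2+T_{22}^2+T_{33}^2+2(T_{12}^2+T_{13}^2+T_{23}^2)$ and expand each factor of the form $(c\alpha-\beta)^2/s^2$ as $\cot^2\theta\,\alpha^2-(2c/s^2)\alpha\beta+\csc^2\theta\,\beta^2$. Using $\csc^2\theta=1+\cot^2\theta$, the pieces split naturally into a $\cot^2\theta$-coefficient part plus a bounded remainder. I would then apply Young's inequality to the mixed terms $(c/s^2)\alpha\beta$ with weights chosen so that the residual folds into $\|\bm{l}_1\|_2^2+\|\bm{l}_2\|_2^2$. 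The compatibility relation, which in these coordinates reads $l_2^y=cl_1^y-sl_1^x$, contributes an extra $s^2(l_1^x)^2-2sc\,l_1^xl_1^y+c^2(l_1^y)^2$ on the right-hand side through $(l_2^y)^2$, and this surplus is precisely what is needed to absorb the otherwise unavoidable cross-terms on the left.

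The main obstacle is the final constant. A naive term-by-term application of Cauchy--Schwarz yields an inequality of the shape $C_1+C_2\cot^2\theta$ with $C_1$ strictly larger than $3/2$; obtaining the sharp $\tfrac{3}{2}$ requires calibrating the Young weights against the compatibility-generated $(l_2^y)^2$ contribution and treating the in-plane and out-of-plane parts (with respect to the $\bm{t}$-direction) in a coordinated way rather than separately. Once this bookkeeping is set up correctly the remaining verification is a routine elementary computation.
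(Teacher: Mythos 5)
Your coordinate frame is a faithful translation of the paper's construction, but there is a genuine gap: in the generality you allow, the inequality you are trying to prove is \emph{false}, so no amount of calibrating Young weights will close the argument. You retain the out-of-plane entries $T_{13}=(c\,l_1^z-l_2^z)/s$ and $T_{23}=l_1^z$, which enter $\norm{T}_F^2$ with a factor $2$, and that factor exceeds $\tfrac32$. Concretely, take $\theta=\pi/2$, $\bm{l}_1=0$ and $\bm{l}_2$ a unit vector parallel to $\bm{t}$: the compatibility condition $\bm{l}_1^T\bm{n}_2=\bm{l}_2^T\bm{n}_1=0$ holds, every admissible symmetric $T$ is forced to have $|T_{13}|=1$, hence $\norm{T}_F^2\geqslant2$, while the claimed bound gives $\tfrac32\left(\norm{\bm{l}_1}_2^2+\norm{\bm{l}_2}_2^2\right)=\tfrac32$. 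The difficulty you flag at the end about "the final constant" is therefore not a bookkeeping issue but a sign that the lemma needs the extra hypothesis $\bm{l}_1,\bm{l}_2\perp\bm{t}$ (i.e.\ $l_1^z=l_2^z=0$ in your frame). The paper assumes this implicitly by writing $\bm{l}_1=c\bm{n}_2+a\bm{t}_2$ and $\bm{l}_2=c\bm{n}_1+b\bm{t}_1$, and the hypothesis is satisfied in every application, where the prescribed normal components are always of the form $\alpha\bm{n}_j+\beta\bm{t}_j$ with $\bm{n}_j,\bm{t}_j\perp\bm{t}$.

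Once that restriction is added, your $T$ has $T_{13}=T_{23}=T_{33}=0$ and coincides with the paper's ansatz $T=A\bm{n}_1\bm{n}_1^T+B\bm{t}_1\bm{t}_1^T+C(\bm{n}_1\bm{t}_1^T+\bm{t}_1\bm{n}_1^T)$. The two-dimensional computation then closes cleanly through the identity $\norm{T}_F^2=a^2+b^2+c^2+\left[c-\cot\theta\,(a-b)\right]^2$ with $a=\bm{l}_1^T\bm{t}_2$, $b=\bm{l}_2^T\bm{t}_1$, $c=\bm{l}_1^T\bm{n}_2$: Cauchy--Schwarz gives $\left[c-\cot\theta(a-b)\right]^2\leqslant\left(1+4\cot^2\theta\right)\left(\tfrac{a^2+b^2}{2}+c^2\right)$, and since $\norm{\bm{l}_1}_2^2+\norm{\bm{l}_2}_2^2=a^2+b^2+2c^2$ the constant $\tfrac32+2\cot^2\theta$ comes out exactly, with no delicate use of the compatibility relation beyond the definition of $c$. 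Note also that the uniqueness asserted in the statement only holds within $\{T\in\Sym:\ T\bm{t}=0\}$, the span of $\bm{n}_1\bm{n}_1^T$, $\bm{n}_2\bm{n}_2^T$ and $\bm{n}_1\bm{n}_2^T+\bm{n}_2\bm{n}_1^T$; your normalization $T_{33}=0$ is the right reading of that claim, but it should be presented as a restriction of the solution space rather than as a minimal-norm selection, since for general $\bm{l}_i$ your $T$ would not lie in that subspace.
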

	\begin{proof}
		The lemma is proved by construction. Let $\bm{l}_1=c\bm{n}_2+a\bm{t}_2$ and $\bm{l}_2=c\bm{n}_1+b\bm{t}_1$, with coefficients $a,b$ and $c=\bm{l}_1^T\bm{n}_2=\bm{l}_2^T\bm{n}_1$. Expand $T$ as $T=A\bm{n}_1\bm{n}_1^T+B\bm{t}_1\bm{t}_1^T+C\left(\bm{n}_1\bm{t}_1^T+\bm{t}_1\bm{n}_1^T\right)$. The fact that $T\bm{n}_1=\bm{l}_1$ implies $A=\bm{l}_1^T\bm{n}_1=c\cos\theta+a\sin\theta$, $C=\bm{l}_1^T\bm{t}_1=-c\sin\theta+a\cos\theta$, and the fact that $T\bm{n}_2=\bm{l}_2$ implies
		\begin{equation*}
			B=\frac{1}{\sin\theta}(C\cos\theta-b)=-\frac{b}{\sin\theta}-c\cos\theta+a\frac{\cos^2\theta}{\sin\theta}.
		\end{equation*}
		
		Therefore,
		\begin{equation*}
			\begin{aligned}
				\norm{T}_f^2&=A^2+B^2+2C^2=a^2+b^2+c^2+\left(c-\cot\theta\left(a-b\right)\right)^2\\
				&\leqslant a^2+b^2+c^2+\left(1+4\cot^2\theta\right)\left(\frac{a^2+b^2}{2}+c^2\right)\\
				&\leqslant\left(\frac{3}{2}+2\cot^2\theta\right)(a^2+b^2+2c^2)=\left(\frac{3}{2}+2\cot^2\theta\right)\left(\norm{\bm{l}_1}_2^2+\norm{\bm{l}_2}_2^2\right).
			\end{aligned}
		\end{equation*}
This concludes the proof.
	\end{proof}

	The following lemma characterize $\TT_e$ and plays a crucial role in the construction of a basis of $\TT_e$. Since the airy function $J=\begin{bmatrix}
		\frac{\partial^2}{\partial x_2^2}&-\frac{\partial^2}{\partial x_1\partial x_2}\\
		-\frac{\partial^2}{\partial x_1\partial x_2}&\frac{\partial^2}{\partial x_1^2}
		\end{bmatrix}$ maps $H^2(\Omega)$ to $H(\dv,\Omega;\Sym)$ in 2D \cite{Arnold-Winther-conforming}, the construction of the second order derivative DoFs in \cite{Nodal,nodalbasis} for the $C^1$ continuous finite elements in 2D can be generalized to the current case. The parity of the number of elements $m_e$ in the patch around an interior edge $e$   determines the choices of the basis of $\TT_e$. Especially for an interior non-singular edge $e$ with even $m_e$, the constraint of the tangential second order partial derivatives \cite[(32)]{Nodal} can be used to derive a constraint for the normal-normal components of matrices in $\TT_e$. For completeness, this paper gives a detailed proof.

	\begin{lemma}  Let $\omega_e$ be the edge patch of  an edge $e$ consisting of $m_e$ elements. It holds that:
		
		\begin{enumerate}
			\item $e$ is an interior non-singular edge with even $m_e$: For any $T\in\TT_e$, it holds
			\begin{equation}
			\sum_{j=1}^{m_e}(-1)^j\left(\cot\theta_{j-1}+\cot\theta_j\right)\bm{n}_j^TT\bm{n}_j=0.
			\label{even-constraint}
			\end{equation}
			On the contrary, for any $\gamma_j\in\R$, $1\leqslant j\leqslant m_e$ satisfying $\sum_{j=1}^{m_e}(-1)^j(\cot\theta_{j-1}+\cot\theta_j)\gamma_j=0$, there exists $T\in\TT_e$ such that $\bm{n}_j^TT\bm{n}_j=\gamma_j$. Further, such $T$ is unique if $\bm{n}_1^TT\bm{t}_1=0$. Especially for $\gamma_j=0$ with $1\leqslant j\leqslant m_e$, there exists a unique $T\in\TT_e$ such that $\bm{n}_j^TT\bm{n}_j=\gamma_{j}=0$, and $\bm{n}_1^TT\bm{t}_1=1$.

			\item $e$ is an interior edge with odd $m_e$: For any $\gamma_j\in\R$, $1\leqslant j\leqslant m_e$, there exists a unique $T\in\TT_e$ such that $\bm{n}_j^TT\bm{n}_j=\gamma_j$.
			
			\item $e$ is a singular edge: For any $\gamma_j\in\R$, $1\leqslant j\leqslant4$, there exists $T\in\TT_e$ such that $\bm{n}_j^TT\bm{n}_j=\gamma_j$. Further, such $T$ is unique if $\bm{n}_1^TT\bm{t}_1=0$. Especially for $\gamma_{j}=0$ with $1\leqslant j\leqslant4$, there exists a unique $T\in\TT_e$ such that $\bm{n}_j^TT\bm{n}_j=\gamma_j=0$, and $\bm{n}_1^TT\bm{t}_1=1$.
			
			\item $e$ is a boundary edge: For any $\gamma_j\in\R$, $1\leqslant j\leqslant m_e+1$, there exists $T\in\TT_e$ such that $\bm{n}_j^TT\bm{n}_j=\gamma_j$. Further, such $T$ is unique if $\bm{n}_1^TT\bm{t}_1=0$. Especially for $\gamma_j=0$ with $1\leqslant j\leqslant m_e+1$, there exists a unique $T\in\TT_e$ such that $\bm{n}_j^TT\bm{n}_j=\gamma_{j}=0$, and $\bm{n}_1^TT\bm{t}_1=1$.

		\end{enumerate}

		\label{normal-normal}
	\end{lemma}
	\begin{proof}
		
		Given $T\in\TT_e$, assume $T\bm{n}_j=\gamma_j\bm{n}_j+\alpha_j\bm{t}_j$ for $j=1,2,\cdots,m_e\ (m_e+1 \text{ for $e\in\E^{b}(\T_h)$ })$. Note that $\gamma_{j}=\bm{n}_j^TT\bm{n}_j$ and $\alpha_{j}=\bm{n}_j^TT\bm{t}_j$. For each element $K_j$ with $1\leqslant j\leqslant m_e$, the compatible condition in Lemma \ref{T-bound} reads $\bm{n}_{j+1}^T\left(T\bm{n}_j\right)=\bm{n}_{j}^T\left(T\bm{n}_{j+1}\right)$, i.e. 
		\begin{equation*}
		\gamma_j\cos\theta_j-\alpha_j\sin\theta_j=\gamma_{j+1}\cos\theta_j+\alpha_{j+1}\sin\theta_j,
		\end{equation*}
		or equivalently,
		\begin{equation}
		(\gamma_j-\gamma_{j+1})\cot\theta_j=\alpha_{j}+\alpha_{j+1}.
		\label{compatible}
		\end{equation}
		The proof is divided into four cases.
		
		\begin{enumerate}
			\item \textbf{Case 1:} $e$ is an interior non-singular edge with even $m_e$. 
			A summation of the multiplication of \eqref{compatible} with $(-1)^j$ shows
			\begin{equation*}
			\begin{aligned}
				0&=\sum_{i=1}^{m_e}(-1)^j\left(\alpha_{j}+\alpha_{j+1}\right)=\sum_{j=1}^{m_e}(-1)^j(\gamma_j-\gamma_{j+1})\cot\theta_j\\
				&=\sum_{j=1}^{m_e}(-1)^j\gamma_j\left(\cot\theta_{j-1}+\cot\theta_{j}\right).
			\end{aligned}
			\end{equation*}
			This proves \eqref{even-constraint}. On the contrary, given $\gamma_{j}$ satisfying \eqref{even-constraint}, Lemma \ref{T-bound} guarantees the existence of the desired $T$ if there exists $\alpha_{j}$, $1\leqslant j\leqslant m_e$ such that \eqref{compatible} holds. Actually, for fixed $\alpha_{1}\in\R$, $\alpha_2,\cdots,\alpha_{m_e}$ can be determined uniquely by solving \eqref{compatible} sequentially. Such $T$ is unique if $\alpha_{1}$ is set to be zero. Especially, for $\gamma_1=\cdots=\gamma_{m_e}=0$, \eqref{compatible} leads to $\alpha_{1}=-\alpha_{2}=\cdots=-\alpha_{m_e}$. By setting $\alpha_{1}=1$, all the other $\alpha_{j}$ are determined.

			\item \textbf{Case 2:} $e$ is an interior edge with odd $m_e$. Given any $\gamma_{j}$, similar to the previous case, it suffices to show that there exists $\alpha_{j}$, $1\leqslant j\leqslant m_e$ such that \eqref{compatible} holds. These equations are linearly independent, and $\alpha_j$ can be solved uniquely as follows
			\begin{equation}
				\alpha_j=\frac{1}{2}\sum_{i=j}^{j+m_e-1}(-1)^{i-j}\left(\gamma_i-\gamma_{i+1}\right)\cot\theta_i,\quad 1\leqslant j\leqslant m_e.
				\label{eq: odd_alpha}
			\end{equation}

			\item \textbf{Case 3:} $e$ is a singular edge. Given any $\gamma_{j}$, the mesh condition $\theta_1=\pi-\theta_2=\theta_3=\pi-\theta_4$ shows that \eqref{compatible} read
			\begin{equation}
				\left\{
				\begin{aligned}
					\left(\gamma_1-\gamma_2\right)\cot\theta_1&=\alpha_1+\alpha_2,\\
					-\left(\gamma_2-\gamma_3\right)\cot\theta_1&=\alpha_2+\alpha_3,\\
					\left(\gamma_3-\gamma_4\right)\cot\theta_1&=\alpha_3+\alpha_4,\\
					-\left(\gamma_4-\gamma_1\right)\cot\theta_1&=\alpha_4+\alpha_1.
				\end{aligned}
				\right.
				\label{eq: singular_alpha}
			\end{equation}
			For fixed $\alpha_{1}\in\R$, $\alpha_2,\alpha_{3},\alpha_{4}$ can be determined uniquely by solving \eqref{eq: singular_alpha} sequentially. Such $T$ is unique if $\alpha_{1}$ is set to be zero. Especially, for $\gamma_1=\cdots=\gamma_{4}=0$, \eqref{eq: singular_alpha} leads to $\alpha_{1}=-\alpha_{2}=\alpha_{3}=-\alpha_{4}$. By setting $\alpha_{1}=1$, all the other $\alpha_{j}$ are determined.
			
			\item \textbf{Case 4: } $e$ is a boundary edge. Given any $\gamma_{j}$ and fixed $\alpha_{1}\in\R$, $\alpha_2,\cdots,\alpha_{m_e+1}$ can be determined uniquely by solving \eqref{compatible} sequentially. Such $T$ is unique if $\alpha_{1}$ is set to be zero. Especially, for $\gamma_1=\cdots=\gamma_{m_e+1}=0$, \eqref{compatible} leads to $\alpha_{1}=-\alpha_{2}=\cdots=(-1)^{m_e}\alpha_{m_e+1}$. By setting $\alpha_{1}=1$, all the other $\alpha_{j}$ are determined.
		\end{enumerate}
	
	The above discussions conclude the proof.
	\end{proof}

	\begin{remark}
		The proof of Lemma \ref{normal-normal} shows that by choosing proper $\gamma_{j}$, one can construct a basis of $\TT_e$. In particular,
		\begin{equation}
			\dim\TT\nolimits_e=\begin{cases}
			m_e,&\text{if $e$ is an interior non-singular edge},\\
			m_e+1,&\text{if $e$ is a singular edge},\\
			m_e+2,&\text{if $e$ is a boundary edge}.
			\end{cases}
			\label{dim_Te}
		\end{equation}
		Assumption \ref{ass} is not required in this lemma.
	\end{remark}

As shown in Lemma \ref{normal-normal}, the normal-normal components on all faces of each matrix in $\TT_e$ are  linearly dependent for an interior non-singular edge $e$ with even $m_e$. This will bring difficulties in the analysis of the discrete inf-sup condition of the mixed element. To circumvent this, the following lemma introduces some matrices in $\TT_e$ that have normal-normal components of the same sign on two adjacent interior faces, under the mesh conditions in Assumption \ref{ass}. Those matrices and the boundness of their norms will enable to construct a stable interpolation operator for proving the discrete inf-sup condition in next section. 		Analogous arguments in the norm estimates have been used in \cite{Gong} to measure the vertex singularity. For convenience, define the norm $\norm{T}:=\max_{1\leqslant j\leqslant m_e}\norm{\trace{T}{K_j}}_f$ for $T\in\TT_e$.

	\begin{lemma}\label{T-exist}Suppose $\T_h$ satisfies Assumption \ref{ass}. Let $\omega_e$ be the edge patch for  an edge $e$ consisting of $m_e$ elements. It holds that:
	\begin{enumerate}
	\item If $e$ is an interior non-singular edge with even $m_e$, then there exists $T\in\TT_e$ and some positive constants $C,\ C_1$ and $C_2$ independent of $e$ and $m_e$, such that
			\begin{equation}
		\begin{aligned}
				&\bm{n}_{1}^TT\bm{n}_{1}=1,\ \bm{n}_{2}^TT\bm{n}_{2}\geqslant 0,
				\bm{n}_{j}^TT\bm{n}_{j}=0,\ \text{for $j=3,4,\cdots m_e$},\\
				&C_1\kappa^{2}\leqslant\bm{n}_{2}^TT\bm{n}_{2}\leqslant C_2\kappa^{-2},\quad \norm{T}\leqslant C\kappa^{-2}.
		\end{aligned}
			\label{condition}
		\end{equation}
	\item  Otherwise, there exists $T\in \TT_e$ and a positive constant $C$ independent of $e$ and $m_e$ such that
			\begin{equation}\label{conditionOther}	\begin{aligned}
				&\bm{n}_{1}^TT\bm{n}_{1}=1,\bm{n}_{j}^TT\bm{n}_{j}=0,\ \text{for $j=2,3,\cdots m_e$}(m_e+1 \text{ for $e\in\E\nolimits^{\partial}(\T\nolimits_h)$ })\\
&\ \norm{T}\leqslant C\kappa^{-2}.	\end{aligned}
			\end{equation}
	\end{enumerate}
	\end{lemma}
	\begin{proof}
		The proof follows some arguments in Lemma \ref{normal-normal}.
		
		\textbf{Step 1 considers an interior non-singular edge $e$ with even $m_e$.} The mesh conditions in Assumption \ref{ass} read $\kappa<\theta_j<\pi-\kappa$ and $\theta_{j}+\theta_{j+1}<\pi-\kappa$, with $1\leqslant j\leqslant m_e$. The identity \eqref{even-constraint} in Lemma \ref{normal-normal} implies that the desired $T$ satisfies
		\begin{equation*}
		\bm{n}_2^TT\bm{n}_2=\frac{\cot\theta_1+\cot\theta_{m_e}}{\cot\theta_1+\cot\theta_2}=\frac{\sin\theta_2\sin\left(\theta_1+\theta_{m_e}\right)}{\sin\theta_{m_e}\sin\left(\theta_1+\theta_2\right)}:=\beta>0
		\end{equation*}
		with $C_1\kappa^{2}\leqslant\beta\leqslant C_2\kappa^{-2}$ for some constants $C_1$ and $C_2$.
		
		Given $\gamma_{1}=1$, $\gamma_{2}=\beta$ and $\gamma_{3}=\cdots=\gamma_{m_e}=0$ in Lemma \ref{normal-normal}, there exists $T\in\TT_e$ satisfying $T\bm{n}_1=\bm{n}_1+\alpha_1\bm{t}_1$, $T\bm{n}_2=\beta\bm{n}_2+\alpha_2\bm{t}_2$, and $T\bm{n}_j=\alpha_j\bm{t}_j$, $3\leqslant j\leqslant m_e$. Here $\alpha_{j}$ satisfies the compatible conditions in \eqref{compatible} as follows
		\begin{equation*}
		\left\{
		\begin{aligned}
		\alpha_1-\alpha_3&=-\cot\theta_{m_e},\\
		\alpha_1+\alpha_2&=(1-\beta)\cot\theta_1,\\
		\alpha_3=-\alpha_4&=\cdots=(-1)^{m_e-3}\alpha_{m_e}.
		\end{aligned}
		\right.
		\end{equation*}
		Especially, one can choose $\alpha_1=-\cot\theta_{m_e}$, $\alpha_2=(1-\beta)\cot\theta_{1}+\cot\theta_{m_e}$, $\alpha_3=\cdots=\alpha_{m_e}=0$. Lemma \ref{T-bound} lead that $T$ is supported on $K_{m_e}\bigcup K_1\bigcup K_2$ and
		\begin{align}
		\begin{cases}
		\norm{\trace{T}{K_1}}_f^2\leqslant C\left(1+\cot^2\theta_{1}\right)(1+\alpha_{1}^2+\beta^2+\alpha_{2}^2),\\
		\norm{\trace{T}{K_2}}_f^2\leqslant C\left(1+\cot^2\theta_{2}\right)(\beta^2+\alpha_{2}^2+\alpha_{3}^2),\\
		\norm{\trace{T}{K_{m_e}}}_f^2\leqslant C\left(1+\cot^2\theta_{m_e}\right)(1+\alpha_{1}^2).
		\end{cases}
		\label{est: T}
		\end{align}
		Recall $\kappa<\theta_j<\pi-\kappa$ and $\theta_{j}+\theta_{j+1}<\pi-\kappa$, with $1\leqslant j\leqslant m_e$. It remains to estimate the squares of $\beta\cot\theta_1$, $\beta\cot^2\theta_1$, $\beta\cot\theta_2$, $\beta\cot\theta_1\cot\theta_2$, and the other terms appearing in the above inequalities are bounded by $C\kappa^{-4}$.
		Note that $\frac{\sin(\theta)}{\sin(\theta_1+\theta)}$ is positive and strictly increasing on $\left[\kappa,\pi-\theta_1-\kappa\right]$, and that $\frac{\sin\kappa}{\sin\theta_1}\leqslant1$. This and elementary calculations show
		\begin{align*}
			\abs{\beta\cot\theta_1}&=\frac{\sin\theta_2}{\sin\left(\theta_1+\theta_2\right)}\frac{\sin\left(\theta_1+\theta_{m_e}\right)}{\sin\theta_{m_e}}\abs{\cot\theta_1}\leqslant\frac{\sin^2(\theta_1+\kappa)}{\sin^2\kappa}\abs{\cot\theta_1}\\
			&=\frac{\sin(\theta_1+\kappa)}{\sin^2\kappa}\abs{\cos\theta_1}\left(\cos\kappa+\cos\theta_1\frac{\sin\kappa}{\sin\theta_1}\right)\leqslant C\kappa^{-2}.
		\end{align*}
		Similarly,
		\begin{align*}
			\abs{\beta\cot^2\theta_1}&\leqslant\frac{\cos^2\theta_1}{\sin^2\kappa}\left(\cos\kappa+\cos\theta_1\frac{\sin\kappa}{\sin\theta_1}\right)^2\leqslant C\kappa^{-2},\\
			\abs{\beta\cot\theta_2}&=\frac{\abs{\cos\theta_2}}{\sin\left(\theta_1+\theta_2\right)}\frac{\sin\left(\theta_1+\theta_{m_e}\right)}{\sin\theta_{m_e}}\leqslant C\kappa^{-2},\\
			\abs{\beta\cot\theta_1\cot\theta_2}&=\frac{\abs{\cos\theta_2}}{\sin\left(\theta_1+\theta_2\right)}\frac{\sin\left(\theta_1+\theta_{m_e}\right)}{\sin\theta_{m_e}}\abs{\cot\theta_1}\leqslant C\kappa^{-2}.
		\end{align*}
		The above estimates into \eqref{est: T} lead to
		\begin{equation*}
			\norm{\trace{T}{K_j}}_f^2\leqslant C\kappa^{-4},\ j=1,2,m_e.
		\end{equation*}

			\textbf{Step 2 considers an interior edge $e$ with odd $m_e$.} Given $\gamma_{1}=1$, and $\gamma_{2}=\cdots=\gamma_{m_e}=0$ in Lemma \ref{normal-normal}, there exists $T\in\TT_e$ satisfying $T\bm{n}_1=\bm{n}_1+\alpha_1\bm{t}_1$ and $T\bm{n}_j=\alpha_j\bm{t}_j$ for $2\leqslant j\leqslant m_e$. Here $\alpha_{j}$ is given in \eqref{eq: odd_alpha}: $\alpha_1=\frac{\cot\theta_1-\cot\theta_{m_e}}{2}$, $\alpha_2=\frac{\cot\theta_1+\cot\theta_{m_e}}{2}$, and $\alpha_2=-\alpha_3=\cdots=(-1)^{m_e-2}\alpha_{m_e}=-\alpha_{m_e}$. Assumption \ref{ass} yields $\alpha_1^2+\alpha_2^2\leqslant C\kappa^{-2}$. This and Lemma \ref{T-bound} lead to
			\begin{equation*}
				\norm{T}^2\leqslant C\left(1+\kappa^{-2}\right)\max\{1+\alpha_1^2+\alpha_2^2,2\alpha_2^2\}\leqslant C\kappa^{-4}.
			\end{equation*}

			\textbf{Step 3 considers a singular edge $e$.} Given $\gamma_{1}=1$, and $\gamma_{2}=\gamma_{3}=\gamma_{4}=0$ in Lemma \ref{normal-normal}, there exists $T\in\TT_e$ satisfying $T\bm{n}_1=\bm{n}_1+\alpha_1\bm{t}_1$ and $T\bm{n}_j=\alpha_j\bm{t}_j$ for $j=2,3,4$. Here $\alpha_{j}$ can be chosen as $\alpha_1=\cot\theta_1$, $\alpha_2=\alpha_3=\alpha_4=0$ by \eqref{eq: singular_alpha}. This, Lemma \ref{T-bound} and Assumption \ref{ass} lead to
			\begin{equation*}
				\norm{T}^2\leqslant C\left(1+\kappa^{-2}\right)(1+\alpha_1^2)\leqslant C\kappa^{-4}.
			\end{equation*}

			\textbf{Step 4 considers a boundary edge $e$.} Given $\gamma_{1}=1$, and $\gamma_{2}=\cdots=\gamma_{m_e+1}=0$ in Lemma \ref{normal-normal}, there exists $T\in\TT_e$ satisfying $T\bm{n}_1=\bm{n}_1+\alpha_1\bm{t}_1$ and $T\bm{n}_j=\alpha_j\bm{t}_j$ for $2\leqslant j\leqslant m_e+1$. Here $\alpha_{j}$ can be chosen as $\alpha_1=\cot\theta_1$, $\alpha_2=\cdots=\alpha_{m_e+1}=0$ by the compatible conditions \eqref{compatible}. This, Lemma \ref{T-bound} and Assumption \ref{ass} lead to
				\begin{equation*}
				\norm{T}^2\leqslant C\left(1+\kappa^{-2}\right)(1+\alpha_1^2)\leqslant C\kappa^{-4}.
				\end{equation*}

				The combination of Step 1-4 concludes the proof.
	\end{proof}

	\section{A new mixed finite element}
	
	This section introduces a new mixed finite element using $P_3$ polynomials for the stress  and discontinuous $P_2$ polynomials for the displacement. A unisolvent set of DoFs is established for the newly constructed discrete stress space. Then the discrete inf-sup condition is established through macro-element techniques. Finally, the error estimate and the improved estimate are proved.
	
	\subsection{The mixed finite element spaces}
	
	The discrete stress space is the space of piecewise $P_3(K;\Sym)$ functions continuous at vertices:
	\begin{equation}
	\begin{aligned}
		\Sigma_{h}=\{\tau\in H(\dv,\Omega;\Sym):\ \trace{\tau}{K}\in P_3(K;\Sym),\ \forall K\in\T\nolimits_h,\\
		\text{$\tau$ is continuous at vertices}\}.
	\end{aligned}
	\label{stress_space}
	\end{equation}
	The discrete displacement space is
	\begin{equation}	\label{disp_space}
	V_h=\{v_h\in L^2(\Omega;\R\nolimits^3):\ \trace{v_h}{K}\in P_2(K;\R\nolimits^3),\ \forall K\in\T\nolimits_h\}.
	\end{equation}
	
	Similar as the $H(\dv)$-conforming symmetric spaces in 2D in \cite{Gong}, there are no locally defined
DoFs on single element for $\Sigma_h$. Motivated by the characterization of the space $\TT_e$ from Section \ref{sec:constantT}, the DoFs will be given on edge patches. Given $F\in\F\left(\T_h\right)$, let $\bm{n}$ be the unit normal vector of $F$. Define the projection onto the tangent space of $F$ by $\Pi_F(v)=\left(\bm{n}\times v\right)\times\bm{n}$. Let $ND_1(F)$ be the first kind of N\'{e}d\'{e}lec element of face $F$ \cite{Boffi-Brezzi-Fortin2013}, i.e.
	\begin{equation*}
		ND_1(F)=\{v\in\left(P_1(F)\right)^3+\left(P_1(F)\right)^3\times\bm{x}:\ v\cdot\bm{n}=0\}.
	\end{equation*}
	Given an edge $e$, recall the edge patch $\omega_e$ and the related notation from Section \ref{sec:mesh}.
	
	The degrees of freedom read as follows
	\begin{equation}
	\left\{
		\begin{aligned}
			&\tau(x),\quad\forall x\in \V(\T\nolimits_h),\\
			&\ip{\bm{n}^T\tau\bm{n},q}{F},\quad\forall q\in P_0(F),\ \forall F\in\F(\T\nolimits_h),\\
			&\ip{\Pi_F(\tau\bm{n}),v}{F},\quad\forall v\in ND_1(F),\ \forall F\in\F(\T\nolimits_h),\\
			&\ip{\tau,\xi}{K},\quad\forall\xi\in P_1(K;\Sym),\ \forall K\in\T\nolimits_h.
		\end{aligned}
	\right.
	\label{DOF1}
	\end{equation}

		\begin{equation}
			\begin{aligned}
				\ip{\trace{\left(\bm{n}_1^T\tau\bm{t}_1\right)}{F_1},q}{e},& \ip{\trace{\left(\bm{n}_i^T\tau\bm{n}_i\right)}{F_i},q}{e},\\
				&\forall q\in P_1(e),\ 1\leqslant i\leqslant m_e+1,\ \forall e\in\E\nolimits^{b}(\T\nolimits_h),
			\end{aligned}
			\label{DOF2-1}
		\end{equation}

		\begin{equation}
			\ip{\trace{\left(\bm{n}_i^T\tau\bm{n}_i\right)}{F_i},q}{e},\ \forall q\in P_1(e),\ 1\leqslant i\leqslant m_e,\ \forall e\in\E\nolimits^{\circ}(\T\nolimits_h) \text{ with odd $m_e$},
			\label{DOF: odd}
		\end{equation}
		
		\begin{equation}
		\begin{aligned}
			\ip{\trace{\left(\bm{n}_1^T\tau\bm{t}_1\right)}{F_1},q}{e},& \ip{\trace{\left(\bm{n}_i^T\tau\bm{n}_i\right)}{F_i},q}{e},\ \forall q\in P_1(e),\ 2\leqslant i\leqslant m_e,\\
			&\forall \text{ non-singular $e\in\E\nolimits^{\circ}(\T\nolimits_h)$ with even  $m_e$},
		\end{aligned}
		\label{DOF: even}
		\end{equation}
		
		\begin{equation}
		\begin{aligned}
			\ip{\trace{\left(\bm{n}_1^T\tau\bm{t}_1\right)}{F_1},q}{e},& \ip{\trace{\left(\bm{n}_i^T\tau\bm{n}_i\right)}{F_i},q}{e},\\
			&\forall q\in P_1(e),\ 1\leqslant i\leqslant4,\ \forall \text{ singular $e\in\E\nolimits^{\circ}(\T\nolimits_h)$}.
		\end{aligned}
			\label{DOF2}
		\end{equation}

		\begin{remark}
					The DoFs in \eqref{DOF1} are analogous to those of the stress elements in \cite{ChenHuang2022}. Recall the $H(\dv)$ bubble function space $$\Sigma_{K,3,b}=\{\tau\in P_3(K;\Sym):\ \tau\bm{n}=0\text{ on $\partial K$}\}$$ defined in \cite[Section 2.2]{Hu2015trianghigh}. Note that the fourth DoFs in \eqref{DOF1} replace the moments with $\Sigma_{K,3,b}$ originally given in \cite{Hu2015trianghigh}. The DoFs in \eqref{DOF2-1}-\eqref{DOF2} are proposed according to Lemma \ref{normal-normal} and are similar to the second order derivative DoFs for the $C^1$ conforming element in \cite{nodalbasis}. The basis functions with respect to the DoFs in \eqref{DOF1} can be established by following the construction of analogous basis functions for $P_3$ in \cite{ChenHuang2022,HuZhang2015tre}. The basis functions with respect to the DoFs in \eqref{DOF2-1}-\eqref{DOF2} can be established by the multiplication of the basis of $\TT_e$ with scalar Lagrange basis functions. The matrix space $\TT_e$ is characterized in Lemma \ref{normal-normal} and its basis can be easily obtained. The details are omitted here.
		\end{remark}

	The following theorem shows that the set of DoFs in \eqref{DOF1}-\eqref{DOF2} is unisolvent.
	
	\begin{thm}[Unisolvence]
		The set of degrees of freedom given by (\ref{DOF1})-(\ref{DOF2}) is unisolvent for $\Sigma_{h}$.
		\label{Thm: unisolvence}
	\end{thm}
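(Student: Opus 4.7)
The plan is to prove unisolvence in the standard two-step way: first verify that the total number of DoFs in \eqref{DOF1}--\eqref{DOF2} equals $\dim \Sigma_h$, then show that if all DoFs of some $\tau \in \Sigma_h$ vanish, then $\tau$ is identically zero. I focus on the second part, since the count is a bookkeeping exercise using $\dim P_3(K;\Sym) = 120$ per element, the $H(\dv)$ continuity across interior faces, the vertex continuity, and the one scalar constraint per even-parity edge patch provided by Lemma~\ref{normal-normal}.

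For the kernel argument, the vertex DoFs immediately give $\tau(v) = 0$ at every vertex $v \in \V(\T_h)$. The crucial step is then, for each edge $e$ and each face $F_j$ at $e$, to show $\bm{n}_j^T \tau \bm{n}_j|_e \in P_3(e)$ vanishes. Its two endpoint values are already zero, so only the two $P_1(e)$ moments remain, and these are supplied directly by the DoFs \eqref{DOF2-1}--\eqref{DOF2} on the faces where they appear. The subtle case is the one face whose moments are omitted when $m_e$ is even: for such a face I apply Lemma~\ref{normal-normal} pointwise along $e$, noting that $\tau|_e$, taken piecewise over the elements of $\omega_e$, takes values in $\TT_e$ because $\tau\bm{n}$ is continuous across each interior face. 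Then \eqref{even-constraint} becomes a genuine $P_3(e)$ identity, and the vanishing of $\bm{n}_j^T \tau \bm{n}_j|_e$ on the other faces forces the missing one to vanish as well. The singular subcase needs a separate short argument using the cross DoF $\bm{n}_1^T\tau\bm{n}_2|_{F_2}$ together with the symmetry identity \eqref{eq_theta}, because \eqref{even-constraint} degenerates at a singular edge.

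Once $\bm{n}_j^T \tau \bm{n}_j$ vanishes on all edges of a face $F$, the scalar $\bm{n}^T \tau \bm{n}|_F$ has vanishing boundary trace, hence reduces to a face bubble of the form $c\lambda_1\lambda_2\lambda_3$; the $P_0(F)$ face DoF then forces $c=0$. A parallel argument for $\Pi_F(\tau \bm{n})$ --- using vanishing vertex values, the $ND_1(F)$ face moments, and the edge data carried by the cross DoFs (translated to tangential-normal components via \eqref{eq_theta}) --- gives $\Pi_F(\tau \bm{n})|_F = 0$ by a N\'ed\'elec-type unisolvence on the triangle. Combining the two components yields $\tau \bm{n}|_{\partial K} = 0$ for each element $K$, so $\tau|_K$ lies in the Hu--Zhang bubble space $\Sigma_{K,3,b}$ of \eqref{bubble}. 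Finally, the vanishing interior $P_1(K;\Sym)$ moments together with the standard nondegenerate pairing $\Sigma_{K,3,b} \times P_1(K;\Sym) \to \R$ from \cite{HuZhang2015tre} give $\tau|_K \equiv 0$.

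The main obstacle is the edge step: the case analysis on the parity of $m_e$ and on singular versus nonsingular edges must be carried out so that Lemma~\ref{normal-normal} supplies exactly the missing equation whenever a normal-normal DoF is dropped. The DoF choices in \eqref{DOF2-1}--\eqref{DOF2} are tuned so that this compatibility holds, but verifying it rigorously in each case --- especially at singular edges, where the coefficients in \eqref{even-constraint} all vanish --- is the delicate part.
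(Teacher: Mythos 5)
Your kernel (injectivity) argument follows the paper's route almost exactly: vertex values, then the edge-wise normal components via the parity case analysis, then $\bm n^T\tau\bm n|_F$, then $\Pi_F(\tau\bm n)$ as in \cite{ChenHuang2022}, then the interior moments acting on the Hu--Zhang bubble space. Two refinements there. First, what must be shown on each edge is that the \emph{full} $2\times2$ normal block $\bm n_i^T\tau\bm n_j|_e$ vanishes, not only the diagonal entries $\bm n_i^T\tau\bm n_i$: the in-face-normal component of $\Pi_F(\tau\bm n_F)$ along an edge is an off-diagonal entry of that block, and the face-level unisolvence needs it. Once the diagonals vanish, the off-diagonal entries $c_j=\bm n_j^T\tau\bm n_{j+1}$ obey the alternating relation $c_{j+1}/\sin\theta_{j+1}=-c_j/\sin\theta_j$ forced by normal continuity and symmetry; the loop closure kills them when $m_e$ is odd, while for even and boundary edges the surviving one-parameter family is exactly what the cross DoF $\bm n_1^T\tau\bm n_2$ eliminates. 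This is the actual role of that DoF; at a singular edge nothing further is "degenerate" for the diagonal part, since \eqref{DOF2} there retains all four normal-normal moments, so Lemma~\ref{normal-normal} is simply not needed in that case.

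The genuine gap is the dimension count, which you dismiss as bookkeeping. Injectivity of the DoF map only gives $\dim\Sigma_h\leqslant N$; unisolvence also needs $\dim\Sigma_h\geqslant N$, and counting $120$ per element minus the continuity constraints will not produce this, because the inter-element constraints (normal continuity across faces, continuity at vertices, and the relation of Lemma~\ref{normal-normal}) are heavily dependent, so naive counting undercounts the dimension. The paper instead exhibits an explicit subspace: the Chen--Huang space $\Sigma_{CH}$ (restricted to the nodal basis dual to \eqref{DOF1}) plus the edge-bubble spaces $\Sigma_e=\{\lambda_1\lambda_2(a\lambda_1+b\lambda_2)T:\ T\in\TT_e\}$, whose membership in $\Sigma_h$ and mutual independence are what make the lower bound $\dim\Sigma_h\geqslant 6\abs{V}+9\abs{F}+24\abs{K}+2\sum_e\dim\TT_e$ work. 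Some construction of this kind is indispensable, and your proposal as written has no substitute for it.
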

	\begin{proof}

			 \textbf{Step 1 counts the dimension.} For an edge $e$, let $\lambda_{i}$ ($i=1,2$) be the barycentric coordinates associated with the two vertices of $e$, and define the edge bubble function space $\Sigma_e$ to be:
			\begin{equation*}
			\begin{aligned}
				\Sigma_e&=\left\{\tau\in\Sigma_{h}:\ \trace{\tau}{\omega_e}=\lambda_1\lambda_2(a\lambda_1+b\lambda_2)T,\ a,b\in\R,\ T\in\TT\nolimits_e,\right.\\
				&\quad\quad\left.\text{$\tau$ vanishes in $\Omega\backslash\omega_e$}\right\}.
			\end{aligned}
			\end{equation*}
			Since the functions in $\Sigma_e$ vanish on all the edges except for $e$, the sum of $\Sigma_e$ over $e\in\E(\T_h)$ is direct, and $\dim\Sigma_e=2\dim\TT\nolimits_e$.

			Let $\Sigma_{CH}$ be the analogous discrete stress space given in \cite[Theorem 4.13]{ChenHuang2022} for $k=3$, i.e.
			\begin{equation*}
			\begin{aligned}
				\Sigma_{CH}=\{\tau\in H(\dv,\Omega;\Sym):\ &\text{$\trace{\tau}{K}\in P_3(K;\Sym)$ for $\forall K\in\T\nolimits_h$},\\
				 &\text{$\tau$ is continuous at vertices,}\\
				 &\text{$\bm{n}_i^T\tau\bm{n}_j$ is continuous at each edge $e$, $1\leq i\leq j\leq 2$}\}.
			\end{aligned}
			\end{equation*}
			The DoFs for $\Sigma_{CH}$ consist of the DoFs given in \eqref{DOF1} and the following edge DoFs instead of those in \eqref{DOF2-1}-\eqref{DOF2}:
			\begin{equation}
				\ip{\bm{n}_i^T\tau\bm{n}_j,q}{e},\quad\forall q\in P_1(e),\ \forall e\in\E(\T\nolimits_h),\ i,j=1,2.
				\label{DOF_CH}
			\end{equation}
			Here $\bm{n}_1$ and $\bm{n}_2$ are two normal vectors of $e$.

			Notice that
			\begin{equation}
				\Sigma_{h}\supseteq\{\tau\in\Sigma_{CH}:\ \text{$\tau$ vanishes on DoFs in \eqref{DOF_CH}}\}+\bigoplus_{e\in\E(\T_h)}\Sigma_e.
				\label{space}
			\end{equation}
			Since the nodal basis functions in $\Sigma_{CH}$ associated with DoFs in \eqref{DOF1} have vanishing DoFs in \eqref{DOF_CH}, it follows that the sum in the right hand side of \eqref{space} is also direct, and
			\begin{equation*}
				\dim\Sigma_{h}\geqslant6\#\V(\T\nolimits_h)+9\#\F(\T\nolimits_h)+24\#\T\nolimits_h+2\sum_{e\in\E(\T_h)}\dim\TT\nolimits_e.
			\end{equation*}
			Here $\#\cdot$ denotes the cardinal number of the set and $\dim\TT_e$ is given in \eqref{dim_Te}. The right hand side is exactly the number of the DoFs of the new stress space.

			%For any $\tau\in V_e$, $\trace{\tau}{\omega_e}=\phi_epT$, here $p\in P_1(e)$ and $T\in T_e$. Therefore $\dim V_e=2\dim T_e$.

			 \textbf{Step 2 proves that if $\tau\in\Sigma_{h}$ vanishes at all DoFs, then $\tau=0$.} The vanishing first DoFs in \eqref{DOF1} show that $\tau$ vanishes at all vertices of $\T_h$.  Recall that $i=m_e+1$ is understood as $i=1$ if $e$ is an interior edge. For the cases in \eqref{DOF2-1}, \eqref{DOF: odd} and \eqref{DOF2}, the vanishing DoFs lead to $\trace{\left(\trace{\left(\bm{n}_i^T\tau\bm{n}_i\right)}{F_i}\right)}{e}=0$ with $1\leqslant i\leqslant m_e+1$. For the case in \eqref{DOF: even}, note that $F_1$ is chosen in Section 2.2 so that $\cot\theta_1+\cot\theta_{m_e}\neq0$, the combination with \eqref{even-constraint} shows $\trace{\left(\trace{\left(\bm{n}_i^T\tau\bm{n}_i\right)}{F_i}\right)}{e}=0$ with $1\leqslant i\leqslant m_e$. In all cases, this implies $\trace{\left(\trace{\left(\tau\bm{n}_i\right)}{F_i}\right)}{e}=\alpha_i\bm{t}_i+\beta_i\bm{t}$ for some $\alpha_i, \beta_i\in P_3(e)$. Similar to the arguments in Lemma \ref{normal-normal}, the compatible conditions show $\alpha_1=-\alpha_2=\cdots=(-1)^{m_e-1}\alpha_{m_e}=(-1)^{m_e}\alpha_{m_e+1}$. If $e\in\E^{\circ}(\T_h)$ and $m_e$ is odd, this leads to $\alpha_i=0$ for $1\leqslant i\leqslant m_e$. Otherwise, the first DoF in \eqref{DOF2-1}, \eqref{DOF: even} and \eqref{DOF2} shows $\alpha_1=0$. The previous arguments lead to $\trace{\left(\trace{\left(\tau\bm{n}_i\right)}{F_i}\right)}{e}=\beta_i\bm{t}$, for $1\leqslant i\leqslant m_e+1$.

			 	%{\color{blue}For any $K\in\T_h$ and $e\in\E(K)$, $\bm{n}_i^T\left(\trace{\left(\trace{\tau}{K}\right)}{e}\right)\bm{n}_j=0$ ($i,j=1,2$), where $\bm{n}_{1}$ and $\bm{n}_2$ are two linearly independent normal vectors of $e$.}

			 Since $e$ is arbitrary, this fact combined with the second vanishing DoFs in \eqref{DOF1} leads to $\trace{\bm{n}^T\tau\bm{n}}{F}=0$ for any face $F$. As in \cite[Lemma 4.5]{ChenHuang2022}, this combined with the third vanishing DoFs in \eqref{DOF1} again leads to $\trace{\tau\bm{n}}{F}=0$ for any face $F$. This means for any $K\in\T_h$, $\trace{\tau}{K}$ is an $H(\dv)$ bubble function in $\Sigma_{K,3,b}$. The fourth DoFs in \eqref{DOF1} deal with the remaining bubbles and lead to $\tau=0$ \cite{ChenHuang2022,Hu2015trianghigh}. This concludes the proof.
	\end{proof}
	
	\subsection{Discrete inf-sup condition}
	
	The discrete inf-sup condition is analyzed by macro-element techniques and the two-step method \cite[Lemma 3.1, Theorem 3.1]{Hu2015trianghigh}. The first step of the two-step method requires a modified Scott-Zhang interpolation function $\tau_h$ to satisfy
	\begin{equation}
		\int_{\partial K}\tau_h\bm{n}\cdot v\dx{S}=\int_{\partial K}\tau\bm{n}\cdot v\dx{S},\ \forall v\in RM(K),\ \forall K\in\T\nolimits_h,
		\label{modifiedSZ}
	\end{equation}
for some $\tau\in H^1(\Omega;\Sym)$ with $\dv\tau=v_h\in V_h$. Due to the decomposition $v=(v\cdot\bm{n})\bm{n}$ $+\Pi_F(v)$, \eqref{modifiedSZ} can be guaranteed if it holds
	\begin{align}
		\hspace{-1mm}\int_{F}\bm{n}^T\tau_h\bm{n}\left(v\cdot\bm{n}\right)\dx{S}=\int_{F}\bm{n}^T\tau\bm{n}\left(v\cdot\bm{n}\right)\dx{S},\ \forall v\in RM(K),\ \forall F\subset\partial K,
	\label{eq-normal-normal}\\
	\hspace{-1mm}\int_{F}\Pi_F\left(\tau_h\bm{n}\right)\cdot\Pi_F(v)\dx{S}=\int_{F}\Pi_F\left(\tau\bm{n}\right)\cdot\Pi_F(v)\dx{S},\ \forall v\in RM(K),\ \forall F\subset\partial K.\hspace{-1mm}
	\label{eq-tangential-normal}
	\end{align}
	
Firstly, a $\tau_h$ is constructed to satisfy \eqref{eq-normal-normal}. Note that $v\cdot\bm{n}\in P_1(F)$ and there are not enough DoFs in \eqref{DOF1} on faces. Unlike the stress spaces in \cite{Adams,Hu2015trianghigh,HuZhang2015tre} which involve $P_4$ polynomials, in the current case, one cannot construct face bubble functions to satisfy \eqref{eq-normal-normal}. To circumvent this difficulty, the key idea in the following lemma is to utilize Lemma \ref{T-exist} and propose  proper linear combinations  of  the basis functions  in three neighbouring edge patches for each face. The  combinations are exactly the basis functions with respect to the constant and linear moments of the normal-normal component  of the stress on each face. A modified Scott-Zhang interpolation is then constructed locally on macro-elements, which consist of some edge patches.

	\begin{lemma}
		Let $I_h:H^1(\Omega;\Sym)\rightarrow\Sigma_h\cap H^1(\Omega;\Sym)$ be the Scott-Zhang interpolation operator \cite{ScottZhang}. For $\tau\in H^1(\Omega;\Sym)$, there exists $\delta_h^1\in\Sigma_h$ such that for each face $F$,
		\begin{equation}
			\int_{F}\bm{n}^T\delta_h^1\bm{n}p\dx{S}=\int_{F}\bm{n}^T\left(\tau-I_h\tau\right)\bm{n}p\dx{S},\quad\forall p\in P_1(F),
			\label{normal-normal-eq}
		\end{equation}
		and that
		\begin{equation}
			\sum_{K\in\T_h}\left(\norm{\nabla\delta_h^1}_{0,K}^2+h_K^{-2}\norm{\delta_h^1}_{0,K}^2\right)\leqslant C\abs{\tau}_1^2.
			\label{delta-h1-bound}
		\end{equation}
		Here $C$ is a constant dependent on $\kappa$ in Assumption \ref{ass}.
		\label{Lem: delta_h1}
	\end{lemma}
	\begin{proof}
		The proof is divided into five steps.
		
		\textbf{Step 1 finds $\delta_h^{F,i}\in\Sigma_{h}$ for any face $F$ and $i=1,2,3$ such that}
		\begin{equation}
			\left\{
			\begin{aligned}
			\int_{F}\bm{n}^T\delta_h^{F,i}\bm{n}\cdot q_j\dx{S}&=\delta_{ij}\abs{F},\quad j=1,2,3,\\
			\int_{F'}\bm{n}^T\delta_h^{F,i}\bm{n}\cdot q\dx{S}&=0,\quad\forall q\in P_1(F'),\ F'\neq F.
			\end{aligned}
			\right.
			\label{delta_Fj}
		\end{equation}
		Here $\{q_i\}_{1\leqslant i\leqslant3}$ can be chosen as a Lagrangian basis of $P_1(F)$ and $\abs{F}$ denotes the area of $F$.
		
		Assume that $F$ belongs to an element $K=x_0x_1x_2x_3$, see Figure \ref{fig:3dp3p2}. Let $F_i$ be the face of $K$ opposite to the vertex $x_i$ for $i=0,1,2,3$. Without loss of generality, suppose $F=F_2$. Let $e_{ij}$ be the edge between vertices $x_i$ and $x_j$ for $0\leqslant i<j\leqslant3$. Let $\lambda_i$ be the barycentric coordinates with respect to $x_i$. Let $q_1=\trace{\lambda_0}{F_2}$, $q_2=\trace{\lambda_1}{F_2}$, $q_3=\trace{\lambda_3}{F_2}$.

		\begin{figure}[h]\setlength\unitlength{1.5pt}
			\centering
			\begin{picture}(100,80)(0,-22)
			\def\lb{\circle*{0.8}}
			\def\lc{\vrule width1.2pt height1.2pt}
			\def\la{\circle*{0.4}} %线的粗细
			
			\put(0,  -5){$x_2$} % vertex V_3, true cordinate (0,0)
			\put(60,  60){$x_0$} % vertex V_1, true cordinate (60,60)
			\put(80,  -25){$x_1$} % vertex V_2, true cordinate (80,-20)
			\put(105,  -5){$x_3$} % vertex V_4, true cordinate (105,0)
			
			\put(55,  -10){$F_0$} % face F_1			
			\put(40,  10){$F_3$} % face F_4
			\put(77,  11){$F_2$} % face F_3
			\put(53,  20){$F_1$} % face F_2

			\multiput(0,  0)(.2,  -.05){400}{\la} % edge V_2V_3
			\multiput(0,  0)(.15,  .15){400}{\la} % edge V_2V_1		
			\multiput(0,  0)(1.05,  0){100}{\la} % edge V_2V_4
			
			\multiput(60,  60)(.1125,  -.15){400}{\la} % edge V_1V_4
			\multiput(60,  60)(.05,  -.2){400}{\la} % edge V_1V_3
			\multiput(105,  0)(-.0625,  -.05){400}{\la} % edge V_4V_3		

			\end{picture}
			\caption{Element $K$. The face opposite to the vertex $x_i$ is denoted by $F_i$, $i=0,1,2,3$.}
			\label{fig:3dp3p2}
		\end{figure}
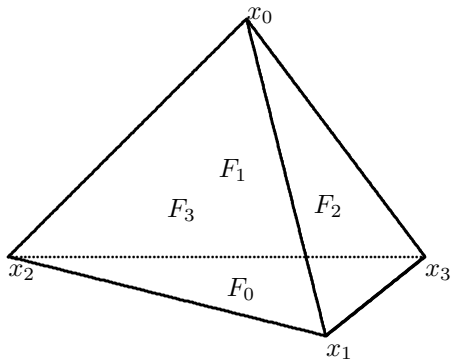

	Lemma \ref{T-exist} shows that there exists $T_1\in\TT_{e_{01}}$ such that $\bm{n}_2^TT_1\bm{n}_2=1$, $\bm{n}_3^TT_1\bm{n}_3\geqslant0$, the normal-normal component of $T_1$ vanishes at the other faces containing $e_{01}$. Select $\delta_1=\lambda_0\lambda_1\left(\lambda_0-1\slash3\right)T_1$. Simple calculations show
	\begin{equation}\label{eq:orthscalar}
	\begin{aligned}
		\int_{F_2}\lambda_0\lambda_1^2\left(\lambda_0-1\slash3\right)\dx{S}=\int_{F_2}\lambda_0\lambda_1\lambda_3\left(\lambda_0-1\slash3\right)\dx{S}=0,\\
		\int_{F_3}\lambda_0\lambda_1^2\left(\lambda_0-1\slash3\right)\dx{S}=\int_{F_3}\lambda_0\lambda_1\lambda_2\left(\lambda_0-1\slash3\right)\dx{S}=0.
	\end{aligned}
	\end{equation}
	The above results lead to
	\begin{equation}
	\begin{aligned}	
		\int_{F_2}\bm{n}_2^T\delta_1\bm{n}_2\cdot\lambda_1\dx{S}=\int_{F_2}\bm{n}_2^T\delta_1\bm{n}_2\cdot\lambda_3\dx{S}=0,\\
		\int_{F_3}\bm{n}_3^T\delta_1\bm{n}_3\cdot\lambda_1\dx{S}=\int_{F_3}\bm{n}_3^T\delta_1\bm{n}_3\cdot\lambda_2\dx{S}=0.
	\end{aligned}
	\label{delta1-vanishing}
	\end{equation}
	
	Since
	\begin{equation}
		\int_{F_3}\bm{n}_3^T\delta_1\bm{n}_3\cdot\lambda_0\dx{S}=\bm{n}_3^TT_1\bm{n}_3\frac{\abs{F_3}}{180}\geqslant0,
		\label{F3-lambda0}
	\end{equation}
	one needs to construct a function $\delta_2$ to cancel out the non-vanishing linear moment of $\bm{n}_3^T\delta_1\bm{n}_3$ on $F_3$. Lemma \ref{T-exist} shows that there exists $T_2\in\TT_{e_{02}}$ such that $\bm{n}_3^TT_2\bm{n}_3=1$, $\bm{n}_1^TT_2\bm{n}_1\geqslant0$, the normal-normal component of $T_2$ vanishes at the other faces containing $e_{02}$. Select $\delta_2=\lambda_0\lambda_2\left(\lambda_0-1\slash3\right)T_2$. Simple calculations show
	\begin{equation}
	\begin{aligned}
		\int_{F_3}\bm{n}_3^T\delta_2\bm{n}_3\cdot\lambda_1\dx{S}&=\int_{F_3}\bm{n}_3^T\delta_2\bm{n}_3\cdot\lambda_2\dx{S}=0,\\
		\int_{F_1}\bm{n}_1^T\delta_2\bm{n}_1\cdot\lambda_2\dx{S}&=\int_{F_1}\bm{n}_1^T\delta_2\bm{n}_1\cdot\lambda_3\dx{S}=0,\\
		\int_{F_3}\bm{n}_3^T\delta_2\bm{n}_3\cdot\lambda_0\dx{S}&=\frac{\abs{F_3}}{180}.
	\end{aligned}
	\label{delta2-vanishing}
	\end{equation}
	The combination of \eqref{delta1-vanishing}-\eqref{delta2-vanishing} shows
	\begin{equation*}
		\int_{F_3}\bm{n}_3^T\big(\delta_1-\left(\bm{n}_3^TT_1\bm{n}_3\right)\delta_2\big)\bm{n}_3\cdot q\dx{S}=0,\ \forall q\in P_1(F_3).
	\end{equation*}

	Similarly, one needs to construct a function $\delta_3$ to cancel out the non-vanishing linear moment of $\bm{n}_1^T\big(\delta_1-\left(\bm{n}_3^TT_1\bm{n}_3\right)\delta_2\big)\bm{n}_1$ on $F_1$. Lemma \ref{T-exist} shows that there exists  $T_3\in\TT_{e_{03}}$ such that $\bm{n}_1^TT_3\bm{n}_1=1$, $\bm{n}_2^TT_3\bm{n}_2\geqslant0$, the normal-normal component of $T_3$ vanishes at the other faces containing $e_{03}$. A choice of $\delta_3=\lambda_0\lambda_3\left(\lambda_0-1\slash3\right)T_3$ leads to
	\begin{equation*}
		\begin{aligned}
			\int_{F_3}\bm{n}_3^T\big(\delta_1-\left(\bm{n}_3^TT_1\bm{n}_3\right)\delta_2+\left(\bm{n}_3^TT_1\bm{n}_3\right)\left(\bm{n}_1^TT_2\bm{n}_1\right)\delta_3\big)\bm{n}_3\cdot q\dx{S}&=0,\ \forall q\in P_1(F_3),\\
			\int_{F_1}\bm{n}_1^T\big(\delta_1-\left(\bm{n}_3^TT_1\bm{n}_3\right)\delta_2+\left(\bm{n}_3^TT_1\bm{n}_3\right)\left(\bm{n}_1^TT_2\bm{n}_1\right)\delta_3\big)\bm{n}_1\cdot q\dx{S}&=0,\ \forall q\in P_1(F_1).
		\end{aligned}
	\end{equation*}
	
	On the other hand, \eqref{eq:orthscalar}  and \eqref{F3-lambda0} imply
	\begin{equation*}
		\begin{aligned}
			&\int_{F_2}\bm{n}_2^T\big(\delta_1-\left(\bm{n}_3^TT_1\bm{n}_3\right)\delta_2+\left(\bm{n}_3^TT_1\bm{n}_3\right)\left(\bm{n}_1^TT_2\bm{n}_1\right)\delta_3\big)\bm{n}_2\cdot\lambda_j\dx{S}\\
			&=\begin{cases}\big(1+\left(\bm{n}_3^TT_1\bm{n}_3\right)\left(\bm{n}_1^TT_2\bm{n}_1\right)\left(\bm{n}_2^TT_3\bm{n}_2\right)\big)\frac{\abs{F_2}}{180} \neq0, &j=0,\\
			0,&j=1,3.
			\end{cases}
		\end{aligned}
	\end{equation*}
	
	By normalization, $\delta_h^{F_2,1}$ in \eqref{delta_Fj} with $F=F_2$ can be chosen as
	\begin{equation}
		\delta_h^{F_2,1}=\frac{180\big(\delta_1-\left(\bm{n}_3^TT_1\bm{n}_3\right)\delta_2+\left(\bm{n}_3^TT_1\bm{n}_3\right)\left(\bm{n}_1^TT_2\bm{n}_1\right)\delta_3\big)}{1+\left(\bm{n}_3^TT_1\bm{n}_3\right)\left(\bm{n}_1^TT_2\bm{n}_1\right)\left(\bm{n}_2^TT_3\bm{n}_2\right)}.
		\label{F2-1}
	\end{equation}
	For $i=2,3$, $\delta_h^{F_2,i}$ can be similarly constructed.
	
	\textbf{Step 2 defines}
	\begin{equation}
		\delta_h^1=\sum_{F'\in\F(\T_h)}\sum_{i=1}^{3}\frac{\int_{F'}\bm{n}^T\left(\tau-I_h\tau\right)\bm{n}\cdot q_i\dx{S}}{\abs{F'}}\delta_h^{F',i}.
		\label{delta_h1}
	\end{equation}
	
	For any face $F$ and $q\in P_1(F)$, let $\{q_j\}_{1\leqslant j\leqslant3}$ be a basis of $P_1(F)$ and expand $q=\sum_{j=1}^{3}\alpha_jq_j$, then
	\begin{equation*}
		\begin{aligned}
			&\int_{F}\bm{n}^T\delta_h^1\bm{n}\cdot q\dx{S}=\int_{F}\left(\sum_{i=1}^{3}\frac{\int_{F}\bm{n}^T\left(\tau-I_h\tau\right)\bm{n}\cdot q_i\dx{S}}{\abs{F}}\bm{n}^T\delta_h^{F,i}\bm{n}\right)\cdot q\dx{S}\\
			=&\sum_{i=1}^{3}\sum_{j=1}^{3}\alpha_j\frac{\int_{F}\bm{n}^T\left(\tau-I_h\tau\right)\bm{n}\cdot q_i\dx{S}}{\abs{F}}\int_{F}\left(\bm{n}^T\delta_h^{F,i}\bm{n}\right)q_j\dx{S}\\
			=&\sum_{i=1}^{3}\alpha_i\int_{F}\bm{n}^T\left(\tau-I_h\tau\right)\bm{n}\cdot q_i\dx{S}=\int_{F}\bm{n}^T\left(\tau-I_h\tau\right)\bm{n}\cdot q\dx{S}.
		\end{aligned}
	\end{equation*}
	This proves \eqref{normal-normal-eq}.
	
	\textbf{Step 3 proves the bound of $\delta_h^{F,i}$.} The bound \eqref{delta-h1-bound} is standard from the error estimate for the Scott-Zhang interpolation operator and the scaling argument. Take  \eqref{F2-1}  for an example. The support of $\delta_h^{F_2,1}$ is $\operatorname{supp}\left(\delta_h^{F_2,1}\right):=\bigcup_{1\leqslant i\leqslant3}\omega_{e_{0i}}$ and this forms a macro-element. Actually $\operatorname{supp}\left(\delta_h^{F_2,1}\right)$ is included in the nodal patch $\Omega_{x_0}$. The triangle inequality for the norm yields 
	\begin{equation}
		\norm{\delta_h^{F_2,1}}_*\leqslant\frac{180\big(\norm{\delta_1}_*+\left(\bm{n}_3^TT_1\bm{n}_3\right)\norm{\delta_2}_*+\left(\bm{n}_3^TT_1\bm{n}_3\right)\left(\bm{n}_1^TT_2\bm{n}_1\right)\norm{\delta_3}_*\big)}{1+\left(\bm{n}_3^TT_1\bm{n}_3\right)\left(\bm{n}_1^TT_2\bm{n}_1\right)\left(\bm{n}_2^TT_3\bm{n}_2\right)}
		\label{norm}
	\end{equation}
with $\norm{\delta}_*=\norm{\delta}_{0,K}+h_K\norm{\nabla\delta}_{0,K}$, for an arbitrary element $K\subset\operatorname{supp}\left(\delta_h^{F_2,1}\right)$. According to Lemma \ref{T-exist}, \eqref{norm} can be estimated by discussing the following cases:
	
	\begin{enumerate}
		\item  If $\left(\bm{n}_3^TT_1\bm{n}_3\right)\left(\bm{n}_1^TT_2\bm{n}_1\right)=0$, then \eqref{norm} and Lemma \ref{T-exist} lead to
		\begin{equation*}
		\norm{\delta_h^{F_2,1}}_*\leqslant C\left(1+\kappa^{-2}\right)\left(\norm{\delta_1}_*+\norm{\delta_2}_*\right)\leqslant C\kappa^{-4}h_K^{3\slash2}.
		\end{equation*}
		
		\item If $\left(\bm{n}_3^TT_1\bm{n}_3\right)\left(\bm{n}_1^TT_2\bm{n}_1\right)\neq0$ and $\bm{n}_2^TT_3\bm{n}_2=0$, then 
		\begin{equation*}
		\norm{\delta_h^{F_2,1}}_*\leqslant C\big(\norm{\delta_1}_*+\left(\bm{n}_3^TT_1\bm{n}_3\right)\norm{\delta_2}_*+\left(\bm{n}_3^TT_1\bm{n}_3\right)\left(\bm{n}_1^TT_2\bm{n}_1\right)\norm{\delta_3}_*\big)\leqslant C\kappa^{-6}h_K^{3\slash2}.
		\end{equation*}
		
		\item  If $\left(\bm{n}_3^TT_1\bm{n}_3\right)\left(\bm{n}_1^TT_2\bm{n}_1\right)\left(\bm{n}_2^TT_3\bm{n}_2\right)\neq0$, then
		\begin{equation*}
		\norm{\delta_h^{F_2,1}}_*\leqslant C\big(\norm{\delta_1}_*+\left(\bm{n}_3^TT_1\bm{n}_3\right)\norm{\delta_2}_*+\left(\bm{n}_2^TT_3\bm{n}_2\right)^{-1}\norm{\delta_3}_*\big)\leqslant C\kappa^{-4}h_K^{3\slash2}.
		\end{equation*}
		
	\end{enumerate}

	For any $K\subset\operatorname{supp}\left(\delta_h^{F,i}\right)$, $K$ is included in $\Omega_{F}$.    The combination of the above estimates leads to
	\begin{equation}
		\norm{\delta_h^{F,i}}_{0,K}+h_K\norm{\nabla\delta_h^{F,i}}_{0,K}\leqslant Ch_K^{3\slash2}.
		\label{scale}
	\end{equation}
	
	\textbf{Step 4 estimates the coefficients in \eqref{delta_h1}.} The coefficient of $\delta_h^{F,i}$ in the definition of $\delta_h^1$ can be estimated by standard arguments. The Cauchy inequality  and the Lagrange basis function of leads to
	\begin{align*}
	\abs{\int_{F}\bm{n}^T\left(\tau-I_h\tau\right)\bm{n}\cdot q_i\dx{S}}\leqslant \norm{\tau-I_h\tau}_{0,F}\norm{q_i}_{0,F}\leqslant|F|^{1/2}\norm{\tau-I_h\tau}_{0,F}.
	\end{align*}
This and the local trace inequality show
\begin{align}	
\label{coef}
\begin{aligned}
\big|\int_{F}\bm{n}^T&\left(\tau-I_h\tau\right)\bm{n}\cdot q_i\dx{S}\big| \\
&\leqslant C|F|^{1/2}h_K^{1/2}(\norm{\nabla(\tau-I_h\tau)}_{0,K}+h_K^{-1}\norm{\tau-I_h\tau}_{0,K}).
\end{aligned}
\end{align}
	Recall the element patch $\Omega_K$. The Scott-Zhang interpolation operator \cite{ScottZhang} satisfies the following approximation property:
	\begin{equation}
		\norm{\nabla\left(\tau-I_h\tau\right)}_{0,K}^2+h_K^{-2}\norm{\tau-I_h\tau}_{0,K}^2\leqslant C\abs{\tau}_{1,\Omega_K}^2.
		%\norm{\tau-I_h\tau}_{0}+h\norm{\nabla\left(\tau-I_h\tau\right)}_{0}\leqslant h\norm{\nabla\tau}_0.
		\label{SZ}
	\end{equation}
	Since $\abs{F}\geqslant Ch_K^2$, the combination of the inequalities \eqref{coef} and \eqref{SZ} leads to
	\begin{equation}
		\Big|\frac{\int_{F}\bm{n}^T\left(\tau-I_h\tau\right)\bm{n}\cdot q_i\dx{S}}{\abs{F}}\Big|\leqslant Ch_K^{-1\slash2}\abs{\tau}_{1,\Omega_K}.
		\label{ineq: coef}
	\end{equation}

	\textbf{Step 5 concludes the proof of \eqref{delta-h1-bound}.} Given any $K\in\T_h$, $\delta_h^1$ in \eqref{delta_h1} equals
	\begin{equation}
		\trace{\delta_h^1}{K}=\sum_{F'\in\F(\T_h):\ K\subset\Omega_{F'}}\sum_{i=1}^{3}\frac{\int_{F'}\bm{n}^T\left(\tau-I_h\tau\right)\bm{n}\cdot q_i\dx{S}}{\abs{F'}}\delta_h^{F',i}.
	\end{equation}
	This, the inequalities \eqref{scale} and \eqref{ineq: coef} show
	\begin{equation*}
	\begin{aligned}
		\norm{\nabla\delta_h^1}_{0,K}^2+h_K^{-2}\norm{\delta_h^1}_{0,K}^2&\leqslant C\sum_{F'\in\F(\T_h):\ K\subset\Omega_{F'}}\abs{\tau}_{1,\Omega_{K_{F'}}}^2.
	\end{aligned}
	\end{equation*}
	Here $K_{F'}$ is an element containing face $F'$. For any $K\in\T_h$, define a patch $S_K=\bigcup\left\{\Omega_{K'}:\ K'\subset\Omega_K\right\}$. The summation over $K\in\T_h$ leads to
	\begin{equation*}
		\sum_{K\in\T_h}\left(\norm{\nabla\delta_h^1}_{0,K}^2+h_K^{-2}\norm{\delta_h^1}_{0,K}^2\right)\leqslant C\sum_{K\in\T_h}\sum_{K'\subset S_K}\abs{\tau}_{1,K'}^2\leqslant C\abs{\tau}_1^2.
	\end{equation*}
	This concludes the proof.
	\end{proof}

	Since $v\cdot\bm{n}\in P_1(F)$ for any $v\in RM(K)$, $\tau_h=I_h\tau+\delta_h^1$ in Lemma \ref{Lem: delta_h1}  is the desired interpolation satisfying \eqref{eq-normal-normal}. Secondly, the subsequent theorem uses face DoFs in \eqref{DOF1} to modify $\tau_h$ so that \eqref{eq-tangential-normal} holds as well. The results in the literature only hold for stress spaces involving $P_k$ ($k\geqslant4$) polynomials or on some special macro-element meshes.

	\begin{thm}
		\label{step1}
		Under Assumption \ref{ass}, for any $v_h\in V_h$, there exists a $\tau_h\in\Sigma_h$, such that it holds
		\begin{equation*}
			\int_{K}\left(\dv\tau_h-v_h\right)\cdot v\dx{x}=0,\quad\forall v\in RM(K),
		\end{equation*}
		and
		\begin{equation*}
			\norm{\tau_h}_{H(\dv)}\leqslant C\norm{v_h}_0.
		\end{equation*}
	\end{thm}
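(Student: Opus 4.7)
The plan is to build $\tau_h$ as a stable discrete interpolant of a smooth pre-image of $v_h$. First I invoke the continuous-level surjectivity of the symmetric-tensor divergence: for the simply-connected polyhedral domain $\Omega$ there exists $\tau\in H^1(\Omega;\Sym)$ with $\dv\tau=v_h$ and $\norm{\tau}_1\le C\norm{v_h}_0$. Then I set $\tau_h:=I_h\tau+\delta_h^1+\delta_h^2$, where $I_h\tau$ is the Scott-Zhang interpolant into $\Sigma_h\cap H^1(\Omega;\Sym)$, $\delta_h^1$ is the correction from Lemma \ref{delta1} matching the $P_1(F)$ normal-normal face moments of $\tau-I_h\tau$, and $\delta_h^2\in\Sigma_h$ is a second correction matching the tangential-normal face moments against $ND_1(F)$ on each face while leaving the normal-normal moments unchanged.

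Next I verify the rigid-motion moment identity. Since $\eps(p)=0$ for $p\in RM(K)$ and $\tau_h-\tau$ is symmetric, integration by parts gives
\begin{equation*}
\int_K\dv(\tau_h-\tau)\cdot p\,\dx{x}=\sum_{F\in\F(K)}\int_F\bigl[(\bm{n}^T(\tau_h-\tau)\bm{n})\,p_n+\Pi_F\bigl((\tau_h-\tau)\bm{n}\bigr)\cdot p_t\bigr]\dx{S},
\end{equation*}
where $p_n\in P_1(F)$ is the normal trace and $p_t$ the tangential trace. Writing $p=a+\Omega x$ with $a\in\R^3$ and $\Omega\in\R^{3\times 3}$ skew, and decomposing $a=a_n\bm{n}+\Pi_F a$, a short calculation shows $p_t|_F=\Pi_F a+a_n\,\bm{n}\times(x-x_0)$, which is exactly the N\'{e}d\'{e}lec space $ND_1(F)$ as $p$ varies over $RM(K)$. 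Consequently, the two corrections annihilate every face integral, and combined with $\dv\tau=v_h$ this gives $\int_K(\dv\tau_h-v_h)\cdot p\,\dx{x}=0$.

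For the norm bound, Scott-Zhang stability provides $\norm{I_h\tau}_1\le C\norm{\tau}_1$; Lemma \ref{delta1} supplies $\norm{\delta_h^1}_{H(\dv)}\le C\norm{\tau}_1$; an analogous argument yields $\norm{\delta_h^2}_{H(\dv)}\le C\norm{\tau}_1$; and $\norm{\tau}_1\le C\norm{v_h}_0$ from the continuous inf-sup. Chaining these gives $\norm{\tau_h}_{H(\dv)}\le C\norm{v_h}_0$ as required.

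The main obstacle is the construction of $\delta_h^2$. One must assemble an $L^2(F)$-dual basis to $ND_1(F)$ from face-bubble tensors whose normal-normal components vanish on every face (so the $\delta_h^1$ correction is preserved) and whose shape functions satisfy the optimal local scaling $\norm{\cdot}_{0,K}+h_K\norm{\nabla\cdot}_{0,K}\le Ch_K^{3/2}$ needed for the $H(\dv)$ estimate. The geometric flexibility provided by Lemma \ref{T-bound}, together with the tangential-normal face DoFs introduced in \cite{ChenHuang2022}, supplies the required shape functions; with these in hand the bookkeeping for $\delta_h^2$ closely mirrors Step 2 of the proof of Lemma \ref{delta1}.
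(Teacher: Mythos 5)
Your proposal follows essentially the same route as the paper: a continuous pre-image $\tau$ with $\dv\tau=v_h$, the Scott--Zhang interpolant, the normal-normal correction $\delta_h^1$ from Lemma \ref{delta1}, the tangential-normal correction $\delta_h^2$ built from the face DoFs of \cite{ChenHuang2022}, and integration by parts against rigid motions. The only blemish is a slip in your side computation of the tangential trace of $p=a+\Omega x$ (the coefficient of $\bm{n}\times(x-x_0)$ is the normal component of the rotation vector, not of $a$), which does not affect the conclusion that the traces lie in $P_1(F)$ and $ND_1(F)$.
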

	\begin{proof}
		From the wellposedness of the continuous problem \eqref{continuousP}, see \cite{Arnold-Winther-conforming} for the 2D case, there exists a $\tau\in H^1\left(\Omega;\Sym\right)$ such that 
		\begin{equation}
			\dv\tau=v_h,\quad\norm{\tau}_1\leqslant C\norm{v_h}_0.
			\label{tau_continuous}
		\end{equation}
		
		Let $I_h\tau$ be the Scott-Zhang interpolation of $\tau$ and $\delta_h^1$ be given by Lemma \ref{Lem: delta_h1}. The third DoFs in \eqref{DOF1} show that  there exists a $\delta_h^2\in\Sigma_{h}$ such that for any $F\in \F(\T\nolimits_h)$, $\bm{n}^T\delta_h^2\bm{n}$ vanishes on $F$ and
		\begin{equation}
			\label{eq: delta_h2}
			\int_{F}\Pi_F\left(\delta_h^2\bm{n}\right)\cdot w\dx{S}=\int_{F}\Pi_F\left(\tau\bm{n}-I_h\tau\bm{n}-\delta_h^1\bm{n}\right)\cdot w\dx{S},\,\forall w\in\operatorname{ND}_1(F).
		\end{equation}
		
		The equivalence of norms implies
		\begin{equation*}
			\sum_{K\in\T_h}\left(\norm{\nabla\delta_h^2}_{0,K}^2+h_K^{-2}\norm{\delta_h^2}_{0,K}^2\right)\leqslant C\sum_{F\in\F\left(\T_h\right)}|F|^{-1/2}\norm{\Pi_F\left(\tau\bm{n}-I_h\tau\bm{n}-\delta_h^1\bm{n}\right)}_{0,F}^2.
		\end{equation*}
		This and the local trace inequality implies
		\begin{equation}
		\begin{aligned}
		&\sum_{K\in\T_h}\left(\norm{\nabla\delta_h^2}_{0,K}^2+h_K^{-2}\norm{\delta_h^2}_{0,K}^2\right)\\
		\leqslant &C\sum_{K\in\T_h}\left(\norm{\nabla\left(\tau-I_h\tau-\delta_h^1\right)}_{0,K}^2+h_K^{-2}\norm{\tau-I_h\tau-\delta_h^1}_{0,K}^2\right).
		\end{aligned}
		\label{scale2}
		\end{equation}

		Let $\tau_h=I_h\tau+\delta_h^1+\delta_h^2$. Lemma \ref{Lem: delta_h1} and \eqref{eq: delta_h2} show that $\tau_h$ satisfies \eqref{eq-normal-normal} and \eqref{eq-tangential-normal}. This guarantees \eqref{modifiedSZ}. Integration by parts and \eqref{tau_continuous} lead to
		\begin{equation*}
				0=\int_{\partial K}(\tau_h-\tau)\bm{n}\cdot v\dx{S}=\int_{K}\dv\left(\tau_h-\tau\right)\cdot v\dx{x}=\int_{K}\left(\dv\tau_h-v_h\right)\cdot v\dx{x}.
		\end{equation*}

		The inequalities \eqref{delta-h1-bound}, \eqref{SZ} and \eqref{scale2} imply
		\begin{equation}
			\begin{aligned}
			&\sum_{K\in\T_h}\left(\norm{\nabla\delta_h^2}_{0,K}^2+h_K^{-2}\norm{\delta_h^2}_{0,K}^2\right)\\
			\leqslant& C\hspace{-2mm}\sum_{K\in\T_h}\hspace{-2mm}\left(\norm{\nabla\left(\tau-I_h\tau\right)}_{0,K}^2+h_K^{-2}\norm{\tau-I_h\tau}_{0,K}^2+\norm{\nabla\delta_h^1}_{0,K}^2+h_K^{-2}\norm{\delta_h^1}_{0,K}^2\right)\hspace{-2mm}\\
			\leqslant &C|\tau|_1^2.
			\end{aligned}
			\label{delta-h2-bound}
		\end{equation}
		Combing the inequalities \eqref{delta-h1-bound}, \eqref{SZ} and \eqref{delta-h2-bound} shows
		\begin{equation*}
			\norm{\tau_h}_{H(\dv)}^2\leqslant\sum_{K\in\T_h}\norm{\tau_h}_{1,K}^2\leqslant\sum_{K\in\T_h}\left(\norm{I_h\tau}_{1,K}^2+\norm{\delta_h^1}_{1,K}^2+\norm{\delta_h^2}_{1,K}^2\right)\leqslant C\norm{\tau}_1^2\leqslant C\norm{v_h}_0^2.
		\end{equation*}
		This concludes the proof.
	\end{proof}

	Define the orthogonal complement $RM^{\perp}(K)=\{w\in P_2(K;\R^3):\ \left(w,v\right)_K=0,\ \forall v\in RM(K)\}$. With Theorem \ref{step1} and \cite[Theorem 2.2]{Hu2015trianghigh}, the following theorem shows that the newly-proposed pair $\Sigma_h-V_h$ in \eqref{stress_space}-\eqref{disp_space} satisfies the discrete inf-sup condition.

	\begin{thm}[Stability]
		\label{inf-sup}Under Assumption \ref{ass},
		the following discrete inf-sup condition holds:
		\begin{equation*}
			\inf_{v_h\in V_h}\sup_{\tau_h\in\Sigma_h}\frac{\int_{\Omega}\dv\tau_h\cdot v_h\dx{x}}{\norm{v_h}_0\norm{\tau_h}_{H(\dv)}}>0.
		\end{equation*}
			\end{thm}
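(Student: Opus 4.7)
The plan is to assemble the two ingredients prepared above. Given $v_h\in V_h$, if I can construct $\tau_h\in\Sigma_h$ satisfying $\dv\tau_h=v_h$ elementwise together with $\norm{\tau_h}_{H(\dv)}\leqslant C\norm{v_h}_0$, then the inf-sup follows at once from
$$\sup_{\tau_h\in\Sigma_h}\frac{\int_\Omega\dv\tau_h\cdot v_h\dx{x}}{\norm{\tau_h}_{H(\dv)}\norm{v_h}_0}\geqslant\frac{\norm{v_h}_0^2}{\norm{\tau_h}_{H(\dv)}\norm{v_h}_0}\geqslant\frac{1}{C}.$$
So the entire task is to build such a $\tau_h$, which I do by the two-step decomposition already outlined in Section 2.

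First I apply Theorem \ref{step1} to obtain $\tau_h^1\in\Sigma_h$ with $\norm{\tau_h^1}_{H(\dv)}\leqslant C\norm{v_h}_0$ and
$$\int_K\left(\dv\tau_h^1-v_h\right)\cdot p\dx{x}=0,\quad\forall p\in RM(K),\ \forall K\in\T\nolimits_h.$$
Consequently the elementwise residual $r_K:=(v_h-\dv\tau_h^1)|_K$ belongs to $RM^\perp(K)\subset P_2(K;\R^3)$, i.e.\ it is $L^2$-orthogonal to every rigid motion on $K$. This is exactly the hypothesis needed to finish the job with bubble functions.

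Next I invoke Step 2 of the two-step method recalled in the preliminaries, namely the identity $\dv\Sigma_{K,3,b}=RM^\perp(K)$ proved in \cite[Theorem 2.2]{Hu2015trianghigh}. For every $K$ this gives $\tau_b^K\in\Sigma_{K,3,b}$ with $\dv\tau_b^K=r_K$, and a reference-element scaling argument on the finite-dimensional bubble space yields $\norm{\tau_b^K}_{0,K}\leqslant C\norm{r_K}_{0,K}$. Since $\tau_b^K\bm{n}|_{\partial K}=0$, the piecewise assignment $\tau_h^2|_K:=\tau_b^K$ defines an element of $\Sigma_h$, and summing over $K$ gives
$$\norm{\tau_h^2}_{H(\dv)}\leqslant C\norm{v_h-\dv\tau_h^1}_0\leqslant C\left(\norm{v_h}_0+\norm{\tau_h^1}_{H(\dv)}\right)\leqslant C\norm{v_h}_0.$$

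Setting $\tau_h:=\tau_h^1+\tau_h^2$ then yields $\dv\tau_h=v_h$ on every $K$, hence globally since $\tau_h\in H(\dv,\Omega;\Sym)$, together with the bound $\norm{\tau_h}_{H(\dv)}\leqslant C\norm{v_h}_0$, and the inf-sup display above closes the argument. I do not anticipate any significant obstacle in this concluding theorem: the real content has been absorbed into Theorem \ref{step1}, whose construction leans on the $L^2(F)$-dual basis of Lemma \ref{delta1}, which in turn rests on the macro-element existence lemma for matrices with matching normal components (Lemma \ref{T-bound} together with Lemma \ref{normal-normal}) and on Assumption \ref{ass}. Once those are available, the stability proof itself is a routine two-line assembly.
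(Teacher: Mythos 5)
Your proposal is correct and follows essentially the same two-step argument as the paper: Theorem \ref{step1} produces $\tau_h^1$ with the rigid-motion orthogonality, and the bubble-space surjectivity $\dv\Sigma_{K,3,b}=RM^{\perp}(K)$ from \cite[Theorem 2.2]{Hu2015trianghigh} supplies the elementwise correction $\tau_h^2$. The only difference is cosmetic: you spell out the final inf-sup display and the $H(\dv)$-conformity of the piecewise bubble assignment, which the paper leaves implicit.
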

		
		\begin{proof}
			The proof follows the two-step method introduced in \cite{Hu2015trianghigh}.
			
			\textbf{Step 1.} From Theorem \ref{step1}, for any $v_h\in V_h$, there exists a $\tau_h^1\in\Sigma_h$, such that
			\begin{equation*}
				\int_{K}\left(\dv\tau_h^1-v_h\right)\cdot v\dx{x}=0,\quad\forall v\in RM(K),
			\end{equation*}
			and
			\begin{equation*}
				\norm{\tau_h^1}_{H(\dv)}\leqslant C\norm{v_h}_0.
			\end{equation*}

			\textbf{Step 2.} For each element $K$, $v_h-\dv\tau_h^1\in RM^{\perp}(K)$, therefore it follows from \cite[Theorem 2.2]{Hu2015trianghigh}  that there exists a $\tau_h^2\in\Sigma_h$ such that $\trace{\tau_h^2}{K}\in\Sigma_{K,3,b}$ for any $K\in\T_h$ and
			\begin{equation*}
				\dv\tau_h^2=v_h-\dv\tau_h^1,\quad\norm{\tau_h^2}_{H(\dv)}\leqslant C\norm{\dv\tau_h^1-v_h}_{0}.
			\end{equation*}
			
			Set $\tau_h=\tau_h^1+\tau_h^2$. This shows $\dv\tau_h=v_h$ and
			\begin{equation*}
				\norm{\tau_h}_{H(\dv)}\leqslant\norm{\tau_h^1}_{H(\dv)}+C\norm{\dv\tau_h^1-v_h}_{0}\leqslant C\norm{v_h}_0.
			\end{equation*}
This concludes the proof.
		\end{proof}

	\subsection{Error estimates}

	By using the discrete inf-sup condition, similar arguments as in \cite{Hu2015trianghigh,HuZhang2015tre} will lead to the following error estimates for the pair $\Sigma_h-V_h$. The proofs are provided for completeness.

	\begin{thm}[Error estimate]
		Let $(\sigma,u)\in\Sigma\times V$ be the exact solution and $(\sigma_h,u_h)\in\Sigma_h\times V_h$ be the finite element solution.  Under Assumption \ref{ass}, then\begin{equation*}
			\norm{\sigma-\sigma_h}_{H(\dv)}+\norm{u-u_h}_0\leqslant Ch^3\left(\norm{\sigma}_4+\norm{u}_3\right).
		\end{equation*}
	\end{thm}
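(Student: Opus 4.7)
The plan is to invoke the classical Brezzi error estimate for saddle-point problems, for which both required ingredients (K-ellipticity on the discrete kernel and the discrete inf-sup condition) are already in hand, and then reduce the problem to standard interpolation estimates.

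First I would verify K-ellipticity. Since shape functions in $\Sigma_h$ are piecewise $P_3$, their divergences are piecewise $P_2$ vector fields, and together with $H(\dv)$-conformity this gives $\dv\Sigma_h\subseteq V_h$. Consequently, on the kernel $Z_h=\{\tau_h\in\Sigma_h : (\dv\tau_h,v_h)_{L^2(\Omega)}=0,\ \forall v_h\in V_h\}$, the divergence vanishes identically, so $\|\tau_h\|_{H(\dv,\Omega)}=\|\tau_h\|_{L^2(\Omega)}$. The uniform positive-definiteness of $A$ then yields $(A\tau_h,\tau_h)_{L^2(\Omega)}\geqslant C\|\tau_h\|_{H(\dv,\Omega)}^2$ on $Z_h$, which is exactly \eqref{ellipticity}. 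Combining this with the inf-sup bound from Theorem \ref{inf-sup}, the abstract theory for mixed methods (Brezzi's theorem) gives the quasi-optimal estimate
\[
\|\sigma-\sigma_h\|_{H(\dv,\Omega)}+\|u-u_h\|_{L^2(\Omega)}\leqslant C\Bigl(\inf_{\tau_h\in\Sigma_h}\|\sigma-\tau_h\|_{H(\dv,\Omega)}+\inf_{v_h\in V_h}\|u-v_h\|_{L^2(\Omega)}\Bigr).
\]

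Next I would bound the two best-approximation errors separately. For the displacement, I take $v_h$ to be the elementwise $L^2$ projection of $u$ onto the piecewise $P_2$ space $V_h$; since $u\in H^3(\Omega;\R^3)$, a standard Bramble--Hilbert argument gives $\|u-v_h\|_{L^2(\Omega)}\leqslant Ch^3\|u\|_3$. For the stress, I take $\tau_h=I_h\sigma$, the componentwise Scott--Zhang interpolation of $\sigma$ into $\Sigma_h\cap H^1(\Omega;\Sym)$, the fully $H^1$-conforming piecewise $P_3$ symmetric-tensor subspace (which is contained in $\Sigma_h$ since any $H^1$-conforming piecewise $P_3$ symmetric tensor is in particular $H(\dv)$-conforming and continuous at vertices). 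Because this subspace preserves piecewise cubics, the standard Scott--Zhang estimates give $\|\sigma-I_h\sigma\|_{L^2(\Omega)}+h\|\nabla(\sigma-I_h\sigma)\|_{L^2(\Omega)}\leqslant Ch^4\|\sigma\|_4$, so that $\|\dv(\sigma-I_h\sigma)\|_{L^2(\Omega)}\leqslant\|\nabla(\sigma-I_h\sigma)\|_{L^2(\Omega)}\leqslant Ch^3\|\sigma\|_4$ and therefore $\|\sigma-I_h\sigma\|_{H(\dv,\Omega)}\leqslant Ch^3\|\sigma\|_4$.

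Combining the two bounds with the quasi-optimal estimate gives the claimed inequality. I do not anticipate a genuine obstacle: once the discrete inf-sup is available, the error analysis is essentially textbook, and the delicate Fortin-type construction has already been absorbed into Theorem \ref{step1}. The only point that deserves a brief remark is that the $O(h^3)$ rate (rather than $O(h^4)$) in the stress error is dictated by the $P_2$ displacement space through the divergence term, not by the approximation quality of $\Sigma_h$ itself.
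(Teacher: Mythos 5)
Your proposal is correct and follows essentially the same route as the paper: invoke the quasi-optimal (Brezzi) estimate using the discrete inf-sup condition, then choose the Scott--Zhang interpolant for the stress and the elementwise $L^2$ projection for the displacement. Your explicit verification of K-ellipticity via $\dv\Sigma_h\subseteq V_h$ is a detail the paper leaves to its earlier discussion rather than a different argument.
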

	\begin{proof}
		The standard theory of the mixed finite element methods give the following quasi-optimal error estimate:
		\begin{equation*}
			\norm{\sigma-\sigma_h}_{H(\dv)}+\norm{u-u_h}_0\leqslant C\inf_{(\tau_h,v_h)\in \Sigma_h\times V_h}\left(\norm{\sigma-\tau_h}_{H(\dv)}+\norm{u-v_h}_0\right).
		\end{equation*}
		
		Recall that the Scott-Zhang interpolation $I_h:\ H^1(\Omega;\Sym)\rightarrow\Sigma_h$ and the $L^2$ projection $P_h:\ L^2(\Omega;\R^3)\rightarrow V_h$ satisfy the following approximation properties:
		\begin{equation*}
		\begin{aligned}
			\norm{\sigma-I_h\sigma}_0+h\abs{\sigma-I_h\sigma}_1\leqslant Ch^4\norm{\sigma}_4,\\
			\norm{u-P_hu}_0\leqslant Ch^3\norm{u}_3.
		\end{aligned}
		\end{equation*}
	Then the proof is completed by choosing $\tau_h=I_h\sigma$ and $v_h=P_hu$.
	\end{proof}

	\begin{thm}[Improved estimates]
		Under Assumption \ref{ass}, the following estimates hold:
		
		\begin{equation*}
		\begin{aligned}
		&\norm{\dv\sigma-\dv\sigma_h}_0\leqslant Ch^r\norm{\dv\sigma}_r,\ 0\leqslant r\leqslant3,\\
		&\norm{\sigma-\sigma_h}_0\leqslant Ch^{r}\norm{\sigma}_{r},\ 1\leqslant r\leqslant4.
		\end{aligned}
		\end{equation*}
	\end{thm}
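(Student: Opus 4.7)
The plan is to reduce both estimates to an approximation estimate plus a local K-ellipticity argument. First I would dispatch the divergence estimate. The second discrete equation combined with $\dv\Sigma_h\subseteq V_h$ forces $\dv\sigma_h=P_h\dv\sigma$, where $P_h$ is the $L^2$ projection onto the discontinuous $P_2$ space $V_h$. The stated bound then follows at once from the standard elementwise $L^2$-approximation property of $P_h$ on $P_2$, which yields $\norm{\dv\sigma-P_h\dv\sigma}_0\leqslant Ch^m\norm{\dv\sigma}_m$ for $0\leqslant m\leqslant3$.

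For the $L^2$ estimate on $\sigma$, I would build a Fortin-type (commuting) interpolation $\Pi_h\colon H^m(\Omega;\Sym)\to\Sigma_h$ satisfying (i) $\dv\Pi_h\sigma=P_h\dv\sigma$ and (ii) $\norm{\sigma-\Pi_h\sigma}_0\leqslant Ch^m\norm{\sigma}_m$ for $1\leqslant m\leqslant 4$. My construction would reuse the machinery already developed: set
\[
\Pi_h\sigma=I_h\sigma+\delta_h^1+\delta_h^2+\delta_h^b,
\]
where $I_h$ is the Scott--Zhang operator, $\delta_h^1$ is the correction from Lemma \ref{delta1} fixing the $P_1(F)$ normal--normal moments, $\delta_h^2$ is the $\mathrm{ND}_1(F)$ tangential--normal correction already appearing in the proof of Theorem \ref{step1}, and $\delta_h^b$ is a local element bubble in $\Sigma_{K,3,b}$. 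The key observation to verify is that for every $p\in RM(K)$ one has $p\cdot\bm{n}\in P_1(F)$ and $\Pi_F(p)\in\mathrm{ND}_1(F)$ on each face $F$ of $K$ (a short linear algebra check using that $\nabla p$ is skew), so the boundary moments controlled by $\delta_h^1,\delta_h^2$ are exactly the ones needed to kill $\int_{\partial K}(\Pi_h\sigma-\sigma)\bm{n}\cdot p\dx{S}$ for $p\in RM(K)$. Integration by parts then shows that $\dv(\Pi_h\sigma-\sigma)$ already has vanishing $RM(K)$ component, and the remaining $RM^\perp(K)$ component inside $P_2(K;\R^3)$ is canceled by $\delta_h^b$ via $\dv\Sigma_{K,3,b}=RM^\perp(K)$ (Theorem 2.2 of \cite{Hu2015trianghigh}). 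Property (ii) is obtained by combining the Scott--Zhang approximation, the estimate \eqref{delta-h1-bound} for $\delta_h^1$, the analogous scaled bound for $\delta_h^2$ from Theorem \ref{step1}, and a standard scaling bound for the bubble $\delta_h^b$.

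With such a $\Pi_h$ in hand, property (i) places $\Pi_h\sigma-\sigma_h$ in $Z_h$. The K-ellipticity \eqref{ellipticity} gives
\[
C\norm{\Pi_h\sigma-\sigma_h}_0^2\leqslant(A(\Pi_h\sigma-\sigma_h),\Pi_h\sigma-\sigma_h)_{L^2(\Omega)},
\]
and splitting the right-hand side as $(A(\Pi_h\sigma-\sigma),\Pi_h\sigma-\sigma_h)+(A(\sigma-\sigma_h),\Pi_h\sigma-\sigma_h)$, the first error equation together with $\dv(\Pi_h\sigma-\sigma_h)=0$ makes the second term vanish. A Cauchy--Schwarz bound on the first term followed by the triangle inequality $\norm{\sigma-\sigma_h}_0\leqslant\norm{\sigma-\Pi_h\sigma}_0+\norm{\Pi_h\sigma-\sigma_h}_0$ finishes the proof.

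The main obstacle is the construction of $\Pi_h$ with the commuting property; in particular, the verification that the inherited face DoFs ($P_1$ normal--normal from Lemma \ref{delta1} and $\mathrm{ND}_1$ tangential--normal as in Theorem \ref{step1}) collectively span enough of the boundary traces to annihilate the $RM(K)$ part of $\dv(\Pi_h\sigma-\sigma)$ on every element. Once that linear algebra is in place, the bubble correction and the final K-ellipticity argument are routine.
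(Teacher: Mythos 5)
Your proposal is correct and follows essentially the same route as the paper: the paper also builds the Fortin-type operator $\Pi_h\tau=I_h\tau+\delta_h^1+\delta_h^2+\tau_h^2$ (your $\delta_h^b$ is its bubble correction $\tau_h^2$ with $\dv\Sigma_{K,3,b}=RM^\perp(K)$), establishes $\dv\Pi_h\tau=P_h\dv\tau$ together with the $O(h^m)$ approximation bounds, and then invokes the standard Arnold--Winther ellipticity/error-equation argument that you write out explicitly. The only difference is one of presentation: the paper leaves the final duality step as a citation, while you spell it out, and you additionally flag the (correct) check that $p\cdot\bm{n}\in P_1(F)$ and $\Pi_F(p)\in ND_1(F)$ for $p\in RM(K)$, which the paper uses implicitly in Theorem \ref{step1}.
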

	\begin{proof}
		For $\tau\in H^1(\Omega;\Sym)$, denote its Scott-Zhang interpolation by $I_h\tau$ and the modifications in Theorem \ref{step1} by $\delta_h^1$ and $\delta_h^2$. Then for any $v\in RM(K)$ it holds
		\begin{equation*}
			\int_{K}\left(P_h\dv\tau-\dv\left(I_h\tau+\delta_h^1+\delta_h^2\right)\right)\cdot v\dx{x}=\int_{K}\dv\left(\tau-I_h\tau-\delta_h^1-\delta_h^2\right)\cdot v\dx{x}=0.
		\end{equation*}
This implies $P_h\dv\tau-\dv\left(I_h\tau+\delta_h^1+\delta_h^2\right)\in RM^{\perp}(K)$. Similar as in Theorem \ref{inf-sup}, this   shows that there exists a piecewise $H(\dv,\Sym)$ bubble function $\tau_h^2\in\Sigma_h$  and $\tau_h^2|_K\in\Sigma_{K,3,b}$ for any $K\in\T_h$ such that
		\begin{equation*}
			\begin{aligned}
				&\dv\tau_h^2=P_h\dv\tau-\dv\left(I_h\tau+\delta_h^1+\delta_h^2\right),\\
				&\norm{\tau_h^2}_{H(\dv)}\leqslant C\norm{P_h\dv\tau-\dv\left(I_h\tau+\delta_h^1+\delta_h^2\right)}_{0}.
			\end{aligned}
		\end{equation*}
		
		Define the following  operator $\Pi_h: H^1(\Omega;\Sym)\rightarrow\Sigma_h$ by setting $\Pi_h\tau=I_h\tau+\delta_h^1+\delta_h^2+\tau_h^2$, which satisfies
		\begin{equation*}
		\left\{
			\begin{aligned}
			&\dv\Pi_h\tau=P_h\dv\tau,\quad\forall\tau\in H^1(\Omega;\Sym),\\
			&\norm{\Pi_h\tau}_{H(\dv)}\leqslant C\norm{\tau}_1,\\
			&\norm{\tau-\Pi_h\tau}_0\leqslant Ch^{r}\norm{\tau}_r,\quad\forall\tau\in H^r(\Omega;\Sym),\ 1\leqslant r\leqslant4.
			\end{aligned}
		\right.
		\end{equation*}
		
		The stability of $\Pi_h$ follows from the $H^1$ stability of the Scott-Zhang interpolation and the norm estimates for $\delta_h^1$ and $\delta_h^2$. The approximation property of $\Pi_h$ follows from the approximation theory of the Scott-Zhang interpolation operator and for $1\leqslant r\leqslant 4$ it holds:
		\begin{equation*}
			\begin{aligned}
				\norm{\delta_h^1}_0&\leqslant C\left(\sum_{K\in\T_h}\left(\norm{\tau-I_h\tau}_{0,K}^2+h_K^2\norm{\nabla\left(\tau-I_h\tau\right)}_{0,K}^2\right)\right)^{1\slash2}\leqslant Ch^r\norm{\tau}_r,\\
				\norm{\delta_h^2}_0&\leqslant C\left(\sum_{K\in\T_h}\left(\norm{\tau-I_h\tau-\delta_h^1}_{0,K}^2+h_K^2\norm{\nabla\left(\tau-I_h\tau-\delta_h^1\right)}_{0,K}^2\right)\right)^{1\slash2}\leqslant Ch^r\norm{\tau}_r,\\
				\norm{\tau_h^2}_0&\leqslant Ch\norm{P_h\dv\tau-\dv\left(I_h\tau+\delta_h^1+\delta_h^2\right)}_{0}\\
				&\leqslant Ch\big(\norm{(I-P_h)\dv\tau}_0+\norm{\dv(I-I_h)\tau}_0+\norm{\dv\delta_h^1}_0+\norm{\dv\delta_h^2}_0\big)\leqslant Ch^{r}\norm{\tau}_r.
			\end{aligned}
		\end{equation*}

		Then the similar argument as in \cite[Theorem 5.1]{Arnold-Winther-conforming} and \cite{Hu2015trianghigh} shows that
		\begin{equation*}
			\begin{aligned}
				&\norm{\dv\sigma-\dv\sigma_h}_0\leqslant Ch^r\norm{\dv\sigma}_r,\ 0\leqslant r\leqslant3,\\
				&\norm{\sigma-\sigma_h}_0\leqslant Ch^{r}\norm{\sigma}_{r},\ 1\leqslant r\leqslant4.
			\end{aligned}
		\end{equation*}
	\end{proof}

	\section{Conclusions}
	
	In this paper, the first  mixed finite element using $P_3$ polynomials for the stress under some mild mesh conditions in Assumption \ref{ass} is proposed to discretize the Hellinger-Reissner formulation of the linear elasticity problem in 3D. The mesh conditions in Assumption \ref{ass} are mild in that if they hold  for the initial mesh, then they hold  for the uniform refinement of the mesh as well. The DoFs of the stress space are established by characterizing a matrix space on the edge patch. The discrete inf-sup condition is analyzed by macro-element techniques and a careful analysis of the first step of the two-step method in \cite{Hu2015trianghigh,HuZhang2015tre}. The DoFs on the edge patches are used to construct locally supported basis functions with respect to the constant and linear moments of the normal-normal component on faces. The future work will be concerned on weakening the mesh conditions. 

\bibliographystyle{siamplain}
\bibliography{ref}

\end{document}